\renewcommand{\turnstile}[6][s]
    {\ifthenelse{\equal{#1}{d}}
        {\sbox{\first}{$\displaystyle{#4}$}
        \sbox{\second}{$\displaystyle{#5}$}}{}
    \ifthenelse{\equal{#1}{t}}
        {\sbox{\first}{$\textstyle{#4}$}
        \sbox{\second}{$\textstyle{#5}$}}{}
    \ifthenelse{\equal{#1}{s}}
        {\sbox{\first}{$\scriptstyle{#4}$}
        \sbox{\second}{$\scriptstyle{#5}$}}{}
    \ifthenelse{\equal{#1}{ss}}
        {\sbox{\first}{$\scriptscriptstyle{#4}$}
        \sbox{\second}{$\scriptscriptstyle{#5}$}}{}
    \setlength{\dashthickness}{0.111ex}
    \setlength{\ddashthickness}{0.35ex}
    \setlength{\leasturnstilewidth}{2em}
    \setlength{\extrawidth}{0.2em}
    \ifthenelse{%
      \equal{#3}{n}}{\setlength{\tinyverdistance}{0ex}}{}
    \ifthenelse{%
      \equal{#3}{s}}{\setlength{\tinyverdistance}{0.5\dashthickness}}{}
    \ifthenelse{%
      \equal{#3}{d}}{\setlength{\tinyverdistance}{0.5\ddashthickness}
        \addtolength{\tinyverdistance}{\dashthickness}}{}
    \ifthenelse{%
      \equal{#3}{t}}{\setlength{\tinyverdistance}{1.5\dashthickness}
        \addtolength{\tinyverdistance}{\ddashthickness}}{}
        \setlength{\verdistance}{0.4ex}
        \settoheight{\lengthvar}{\usebox{\first}}
        \setlength{\raisedown}{-\lengthvar}
        \addtolength{\raisedown}{-\tinyverdistance}
        \addtolength{\raisedown}{-\verdistance}
        \settodepth{\raiseup}{\usebox{\second}}
        \addtolength{\raiseup}{\tinyverdistance}
        \addtolength{\raiseup}{\verdistance}
        \setlength{\lift}{0.8ex}
        \settowidth{\firstwidth}{\usebox{\first}}
        \settowidth{\secondwidth}{\usebox{\second}}
        \ifthenelse{\lengthtest{\firstwidth = 0ex}
            \and
            \lengthtest{\secondwidth = 0ex}}
                {\setlength{\turnstilewidth}{\leasturnstilewidth}}
                {\setlength{\turnstilewidth}{2\extrawidth}
        \ifthenelse{\lengthtest{\firstwidth < \secondwidth}}
            {\addtolength{\turnstilewidth}{\secondwidth}}
            {\addtolength{\turnstilewidth}{\firstwidth}}}
        \ifthenelse{\lengthtest{\turnstilewidth < \leasturnstilewidth}}{\setlength{\turnstilewidth}{\leasturnstilewidth}}{}
    \setlength{\turnstileheight}{1.5ex}
    \sbox{\turnstilebox}
    {\raisebox{\lift}{\ensuremath{
        \makever{#2}{\dashthickness}{\turnstileheight}{\ddashthickness}
        \makehor{#3}{\dashthickness}{\turnstilewidth}{\ddashthickness}
        \hspace{-\turnstilewidth}
        \raisebox{\raisedown}
        {\makebox[\turnstilewidth]{\usebox{\first}}}
            \hspace{-\turnstilewidth}
            \raisebox{\raiseup}
            {\makebox[\turnstilewidth]{\usebox{\second}}}
        \makever{#6}{\dashthickness}{\turnstileheight}{\ddashthickness}}}}
        \mathrel{\usebox{\turnstilebox}}}
\newcommand{\axlabel}[1]{(#1) \phantomsection \label{ax:#1}}
\newcommand{\axtag}[1]{\label{ax:#1} \tag{#1}}
\newcommand{\axref}[1]{(\hyperref[ax:#1]{#1})}
\newcommand{\newref}[4][]{
\ifthenelse{\equal{#1}{}}{\newtheorem{h#2}[hthm]{#4}}{\newtheorem{h#2}{#4}[#1]}
\expandafter\newcommand\csname r#2\endcsname[1]{#3~\ref{#2:##1}}
\expandafter\newcommand\csname R#2\endcsname[1]{#4~\ref{#2:##1}}
\expandafter\newcommand\csname n#2\endcsname[1]{\ref{#2:##1}}
\newenvironmentx{#2}[2][1=,2=]{
\ifthenelse{\equal{##2}{}}{\begin{h#2}}{\begin{h#2}[##2]}
\ifthenelse{\equal{##1}{}}{}{\label{#2:##1}}
}{\end{h#2}}
}
\theoremstyle{definition}
\theoremstyle{remark}
\newcommand{\deq}{\equiv}
\newcommand{\cat}[1]{\mathbf{#1}}
\newcommand{\ccat}{\cat{CCat}}
\newcommand{\algtt}{\cat{TT}}
\newcommand{\substTh}{\mathbb{S}}
\newcommand{\Mod}[1]{#1\text{-}\cat{Mod}}
\newcommand{\Th}{\cat{Th}}
\newcommand{\St}{\cat{St}}
\newcommand{\PSt}{\cat{PSt}}
\newcommand{\cSt}[1][c]{#1\text{-}\St}
\newcommand{\emptyCtx}{*}
\numberwithin{figure}{section}
\newcommand{\pb}[1][dr]{\save*!/#1-1.2pc/#1:(-1,1)@^{|-}\restore}
\begin{document}

\title{Algebraic Presentations of Dependent Type Theories}

\author{Valery Isaev}

\begin{abstract}
In this paper, we propose an abstract definition of dependent type theories as essentially algebraic theories.
One of the main advantages of this definition is its composability: simple theories can be combined into more complex ones,
and different properties of the resulting theory may be deduced from properties of the basic ones.
We define a category of algebraic dependent type theories which allows us not only to combine theories but also to consider equivalences between them.
We also study models of such theories and show that one can think of them as contextual categories with additional structure.
\end{abstract}

\maketitle

 \makeatletter
    \providecommand\@dotsep{5}
  \makeatother

\section{Introduction}

Type theories with dependent types originally were defined by Per Martin-L\"{o}f, who introduced several versions of the system \cite{MLTT72,MLTT73,MLTT79}.
There were also several theories and extensions of Martin-L\"{o}f's theory proposed by different authors (\cite{CoC,luo94} to name a few).
These theories may have different inference rules, different computation rules, and different constructions.
Many of these theories have common parts and similar properties,
but the problem is that there is no general definition of a type theory such that all of these theories would be a special case of this definition,
so that their properties could be studied in general and applied to specific theory when necessary.
In this paper we propose such a definition based on the notion of essentially algebraic theories.

Another problem of the usual way of defining type theories is that they are not composable.
Some constructions in type theories are independent of each other (such as $\Pi$, $\Sigma$, and $Id$ types),
and others may dependent on other constructions (such as universes),
so we could hope that we can study these constructions independently (at least if they are of the first kind)
and deduce properties of combined theory from the properties of these basic constructions.
But this is not the way it is usually done.
For example, constructing models of dependent type theories is a difficult task because of the so called coherence problem.
There are several proposed solutions to this problems, but the question we are interested in is how to combine them.
Often only the categorical side of the question is considered,
but some authors do consider specific theories \cite{streicher,pitts},
and the problem in this case is that their work cannot be applied to other similar theories (at least formally).

When defining a type theory there are certain questions to be addressed regarding syntactic traits of the theory.
One such question is how many arguments to different construction can be omitted and how to restore them when constructing a model of the theory.
For example, we want to define application as a function of two arguments $app(f,a)$, but sometimes it is convenient to have additional arguments which allows to infer a type of $f$.
It is possible to prove that additional information in the application term may be omitted (for example, see \cite{streicher}), but it is a nontrivial task.
Another question of this sort is whether we should use a typed or an untyped equality.
Typed equality is easier to handle when defining a model of the theory, but untyped is closer to actual implementation of the language.
Algebraic approach allows us to separate these syntactic details from essential aspects of the theory.

Yet another problem is that some constructions may be defined in several different ways.
For example, $\Sigma$ types can be defined using projections (\rexample{sigma-eta}) and using an eliminator (\rexample{sigma-no-eta}).
The question then is whether these definitions are equivalent in some sense.
The difficulty of this question stems from the fact that some equivalences may hold in one definition judgmentally, but in the other only propositionally;
so it may be difficult (or impossible) to construct a map from the first version of the definition to the second one.

In this paper, using the formalism of essentially algebraic theories, we introduce the notion of
\emph{algebraic dependent type theories} which provide a possible solution the problems described above.
We define a category of algebraic dependent type theories.
Coproducts and more generally colimits in this category allow us to combine simple theories into more complex ones.
For example, the theory with $\Sigma$, $\Pi$ and $Id$ types may be described as coproduct $T_\Sigma \amalg T_\Pi \amalg T_{Id}$
where $T_\Sigma$, $T_\Pi$ and $T_{Id}$ are theories of $\Sigma$, $\Pi$ and $Id$ types respectively.

There is a natural notion of a model of an essentially algebraic theory.
Thus the algebraic approach to defining type theories automatically equips every type theory with a (locally presentable) category of its models.
We will show that models of the initial theory are precisely contextual categories,
and that models of an arbitrary theory are contextual categories with an additional structure (which depends on the theory).
An example of a general construction that works for all theories with enough structure is the construction of a model structure on the category of models described in \cite{alg-models}.

Since we have a category of type theories, there is a natural notion of equivalence between them, namely the isomorphism.
In most cases this equivalence is too strong, so it is necessary to consider weaker notions of equivalence, but in some cases it might be useful.
For example, if two theories differ only by the amount of arguments to some of the constructions,
then they are isomorphic (assuming omitted arguments can be inferred from the rest).
A weaker notion of equivalence of theories is Morita equivalence.
Two theories are Morita equivalent if there is a Quillen equivalence between the categories of models of these theories.
We will not consider this notion in this paper.

Usually, we can use all constructions of a type theory in every context.
We consider an additional structure on theories which allows us to do this.
We call theories with this additional structure \emph{prestable}.
Then, an algebraic dependent type theory is a prestable theory with substitutions which commute with every operation in the theory.
We also consider \emph{stable} theories in which all axioms are stable under context extensions.
If we think of models of a prestable theory as some sort of category with some additional structure,
then the prestable structure allows us to pass to slices of this category.
Then a prestable theory is stable if not only the category itself but also every slice category has this additional structure.

The paper is organized as follows.
In section 2, we define the category of partial Horn theories and discuss its properties.
In section 3, we define an example of partial Horn theory and prove that the category of its models is equivalent to the category of contextual categories.
In section 4, we define algebraic type theories and describe a simplified version of the syntax that can be used with these theories.
In section 5, we give a few standard examples of such theories. In particular, we show that the construction that adds a universe to the system is functorial.

\section{Partial Horn theories}
\label{sec:PHT}

There are several equivalent ways of defining essentially algebraic theories (\cite{LPC}, \cite{GAT}, \cite{PHL}, \cite[D 1.3.4]{elephant}).
We use approach introduced in \cite{PHL} under the name of partial Horn theories since it is the most convenient one.
There is a structure of a category on partial Horn theories.
A \emph{generalized morphism} between theories $\mathbb{T}$ and $\mathbb{T}'$ is a model of $\mathbb{T}$ in $\mathcal{C}_{\mathbb{T}'}$,
where $\mathcal{C}_{\mathbb{T}'}$ is the classifying category for $\mathbb{T}'$.
We will work with theories that have some fixed set of sorts.
Thus we need a notion of morphisms which preserve sorts.
Of course, we could restrict the notion of a generalized morphism, but there is another definition of morphisms, which is more explicit.

Let us recall the basic definitions from \cite{PHL}.
A many sorted first-order signature $(\mathcal{S},\mathcal{F},\mathcal{P})$ consists of a set $\mathcal{S}$ of sorts,
a set $\mathcal{F}$ of function symbols and a set $\mathcal{P}$ of predicate symbols.
Each function symbol $\sigma$ is equipped with a signature of the form $\sigma : s_1 \times \ldots \times s_k \to s$, where $s_1$, \ldots $s_k$, $s$ are sorts.
Each predicate symbol $R$ is equipped with a signature of the form $R : s_1 \times \ldots \times s_k$.

An atomic formula is an expression either of the form $t_1 = t_2$ or of the form $R(t_1, \ldots t_n)$,
where $R$ is a predicate symbol and $t_1$, \ldots $t_n$ are terms.
We abbriviate $t = t$ to $t\!\downarrow$.
A Horn formula is an expression of the form $\varphi_1 \land \ldots \land \varphi_n$, where $\varphi_1$, \ldots $\varphi_n$ are atomic formulas.
A sequent is an expression of the form $\varphi \sststile{}{x_1, \ldots x_n} \psi$, where $x_1$, \ldots $x_n$ are variables
and $\varphi$ and $\psi$ are Horn formulas such that $FV(\varphi) \cup FV(\psi) \subseteq \{ x_1, \ldots x_n \}$.
A \emph{partial Horn theory} consists of a signature and a set of Horn sequents in this signature.

Let $V$ be an $\mathcal{S}$-set.
Then the $\mathcal{S}$-set of terms of $\mathbb{T}$ with free variables in $V$ will be denoted by $Term_\mathbb{T}(V)$.
The set of fomulas of $\mathbb{T}$ with free variables in $V$ will be denoted by $Form_\mathbb{T}(V)$.

An $\mathcal{S}$-set $M$ is a collection of sets $\{ M_s \}_{s \in \mathcal{S}}$.
An interpretation $M$ of a signature $(\mathcal{S},\mathcal{F},\mathcal{P})$ is an $\mathcal{S}$-set $M$
together with a collection of \emph{partial} functions $M(\sigma) : M_{s_1} \times \ldots \times M_{s_k} \to M_s$
for every function symbol $\sigma : s_1 \times \ldots \times s_k \to s$ of $\mathbb{T}$
and relations $M(R) \subseteq M_{s_1} \times \ldots \times M_{s_k}$ for every predicate symbol $R : s_1 \times \ldots \times s_k$.
A model of a partial Horn theory $\mathbb{T}$ is an interpretation of the underlying signature such that the axioms of $\mathbb{T}$ hold in this interpretation.
The category of models of $\mathbb{T}$ will be denoted by $\Mod{\mathbb{T}}$.

The rules of \emph{partial Horn logic} are listed below.
A \emph{theorem} of a partial Horn theory $\mathbb{T}$ is a sequent derivable from $\mathbb{T}$ in this logic.
\begin{center}
$\varphi \sststile{}{V} \varphi$ \axlabel{b1}
\qquad
\AxiomC{$\varphi \sststile{}{V} \psi$}
\AxiomC{$\psi \sststile{}{V} \chi$}
\RightLabel{\axlabel{b2}}
\BinaryInfC{$\varphi \sststile{}{V} \chi$}
\DisplayProof
\qquad
$\varphi \sststile{}{V} \top$ \axlabel{b3}
\end{center}

\medskip
\begin{center}
$\varphi \land \psi \sststile{}{V} \varphi$ \axlabel{b4}
\qquad
$\varphi \land \psi \sststile{}{V} \psi$ \axlabel{b5}
\qquad
\AxiomC{$\varphi \sststile{}{V} \psi$}
\AxiomC{$\varphi \sststile{}{V} \chi$}
\RightLabel{\axlabel{b6}}
\BinaryInfC{$\varphi \sststile{}{V} \psi \land \chi$}
\DisplayProof
\end{center}

\medskip
\begin{center}
$\sststile{}{x} x\!\downarrow$ \axlabel{a1}
\qquad
$x = y \land \varphi \sststile{}{V,x,y} \varphi[y/x]$ \axlabel{a2}
\end{center}

\medskip
\begin{center}
\AxiomC{$\varphi \sststile{}{V} \psi$}
\RightLabel{, $x \in FV(\varphi)$ \axlabel{a3}}
\UnaryInfC{$\varphi[t/x] \sststile{}{V,V'} \psi[t/x]$}
\DisplayProof
\end{center}
\medskip

Note that this set of rules is equivalent to the one described in \cite{PHL}.
In particular, the following sequents are derivable if $x \in FV(t)$:
\begin{align*}
R(t_1, \ldots t_k) & \sststile{}{V} t_i = t_i \axtag{a4} \\
t_1 = t_2 & \sststile{}{V} t_i = t_i \axtag{a4'} \\
t[t'/x]\!\downarrow & \sststile{}{V} t' = t' \axtag{a5}
\end{align*}

We will use the following abbreviations:
\begin{align*}
\varphi \sststile{}{V} t \cong s & \Longleftrightarrow \varphi \land t\!\downarrow\,\sststile{}{V} t = s \text{ and } \varphi \land s\!\downarrow\,\sststile{}{V} t = s \\
\varphi \ssststile{}{V} \psi & \Longleftrightarrow \varphi \sststile{}{V} \psi \text{ and } \psi \sststile{}{V} \varphi
\end{align*}

Let $\mathbb{T}$ be a partial Horn theory.
A \emph{restricted term} of $\mathbb{T}$ is a term $t$ together with a formula $\varphi$.
We denote such a restricted term by $t|_\varphi$.
The $\mathcal{S}$-set of restricted terms with free variables in $V$ will be denoted by $RTerm_\mathbb{T}(V)$.
If we think of terms as representations for partial functions,
then we can think of a restricted term $t|_\varphi$ as a restriction of the partial function represented by $t$ to a subset of its domain.
We will use the following abbreviations:
\begin{align*}
R(t_1|_{\varphi_1}, \ldots t_k|_{\varphi_k}) & \Longleftrightarrow R(t_1, \ldots t_k) \land \varphi_1 \land \ldots \land \varphi_k \\
t|_\varphi = s|_\psi & \Longleftrightarrow t = s \land \varphi \land \psi \\
t|_\varphi\!\downarrow & \Longleftrightarrow t\!\downarrow\!\land \varphi \\
\chi \sststile{}{V} t|_\varphi \cong s|_\psi & \Longleftrightarrow \chi \land t|_\varphi\!\downarrow\,\sststile{}{V} t = s \land \psi \text{ and } \chi \land s|_\psi\!\downarrow\,\sststile{}{V} t = s \land \varphi
\end{align*}
We will say that formulas $\varphi$ and $\psi$ are equivalent if the following sequents are derivable:
\[ \varphi \ssststile{}{FV(\varphi) \cup FV(\psi)} \psi \]
We will say that restricted terms $t$ and $t'$ are equivalent if the following sequents are derivable:
\[ \sststile{}{FV(t) \cup FV(t')} t \cong t' \]

Let $\mathbb{T}$ and $\mathbb{T}'$ be partial Horn theories with the same set of sorts $\mathcal{S}$.
An \emph{interpretation} of $\mathbb{T}$ in $\mathbb{T}'$ is a function $f$ such that the following conditions hold:
\begin{enumerate}
\item For every function symbol $\sigma : s_1 \times \ldots \times s_k \to s$ of $\mathbb{T}$,
the function $f$ defines a restricted term $f(\sigma)$ of $\mathbb{T}'$ of sort $s$ such that $FV(f(\sigma)) = \{ x_1 : s_1, \ldots x_k : s_k \}$.
\item For every predicate symbol $P : s_1 \times \ldots \times s_k$,
the function $f$ defines a formula $f(P)$ of $\mathbb{T}'$ such that $FV(f(P)) = \{ x_1 : s_1, \ldots x_k : s_k \}$.
\item For every axiom $\varphi \sststile{}{V} \psi$ of $\mathbb{T}$, the sequent $f(\varphi) \sststile{}{V} f(\psi)$ is derivable in $\mathbb{T}'$.
\end{enumerate}

We will say that interpretations $f$ and $f'$ are equivalent if,
for every predicate symbol $P : s_1 \times \ldots \times s_k$ of $\mathbb{T}$, the formulas $f(P)$ and $f'(P)$ are equivalent
and, for every function symbol $\sigma : s_1 \times \ldots \times s_k \to s$ of $\mathbb{T}$, the terms $f(\sigma)$ and $f'(\sigma)$ are also equivalent.
A \emph{morphism} of theories $\mathbb{T}$ and $\mathbb{T}'$ is an equivalence class of interpretations.

The identity morphisms are defined in the obvious way.
To define the composition of morphisms, we need to extend the definition of a function $f : \mathbb{T} \to \mathbb{T}'$ to terms and formulas.
Let $t$ be a term of $\mathbb{T}$ of sort $s$.
Then we define a restricted term $f(t)$ of $\mathbb{T}'$ by induction on $t$.
If $t = x$ is a variable, then let $f(t) = x$.
If $t = \sigma(t_1, \ldots t_k)$, $f(\sigma) = t'|_\varphi$ and $f(t_i) = t'_i|_{\varphi_i}$,
then let $f(t) = t'[t'_1/x_1, \ldots t'_k/x_k]|_{\varphi[t'_1/x_1, \ldots t'_k/x_k] \land \varphi_1 \land \ldots \land \varphi_k}$.

Let $\varphi$ be a formula of $\mathbb{T}$.
Then we define a formula $f(\varphi)$ of $\mathbb{T}'$.
If $\varphi$ equals to $t_1 = t_2$ and $f(t_i)$ equals to $t'_i|_{\varphi_i}$, then we define $f(\varphi)$ as $t'_1 = t'_2 \land \varphi_1 \land \varphi_2$.
If $\varphi = R(t_1, \ldots t_k)$, $f(R) = \varphi'$ and $f(t_i) = t'_i|_{\varphi_i}$,
then we define $f(\varphi)$ as $\varphi'[t'_1/x_1, \ldots t'_k/x_k] \land \varphi_1 \land \ldots \land \varphi_k$.
For every restricted term $t|_\varphi$ of $\mathbb{T}$, we define $f(t|_\varphi)$ as $f(t)|_{f(\varphi)}$.

Now, we can define the composition of $f : \mathbb{T} \to \mathbb{T}'$ and $g : \mathbb{T}' \to \mathbb{T}''$
as follows: $(g \circ f)(S) = g(f(S))$ for every symbol $S$ of $\mathbb{T}$.
It is easy to see that this definition respect the equivalence of morphisms.

It is obvious that, for every morphism $f : \mathbb{T} \to \mathbb{T}'$ of theories, we have $f \circ id_\mathbb{T} = id_{\mathbb{T}'} \circ f = f$.
Note that for every morphisms $f : \mathbb{T} \to \mathbb{T}'$ and $g : \mathbb{T}' \to \mathbb{T}''$ and every term $t$,
the restricted terms $g(f(t))$ and $(g \circ f)(t)$ are equivalent.
This is easy to do by induction on $t$.
Similarly, for every formula $\varphi$ of $\mathbb{T}$, the formulas $g(f(\varphi))$ and $(g \circ f)(\varphi)$ are equivalent.
It follows that the composition is associative.
The category of partial Horn theory with $\mathcal{S}$ as the set of sorts will be denoted by $\Th_\mathcal{S}$.
Its objects are tuples $(\mathcal{F},\mathcal{P},\mathcal{A})$, where $\mathcal{F}$ is a set of function symbols,
$\mathcal{P}$ is a set of predicate symbols, and $\mathcal{A}$ is a set of axioms.

\begin{prop}[th-cocomplete]
The category $\Th_\mathcal{S}$ is cocomplete.
\end{prop}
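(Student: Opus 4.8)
The plan is to invoke the standard reduction: a category has all small colimits as soon as it has all small coproducts and all coequalizers of parallel pairs, since the colimit of a diagram $D \colon J \to \Th_\mathcal{S}$ can be exhibited as the coequalizer of the two evident maps $\coprod_{u \colon j \to j'} D(j) \rightrightarrows \coprod_{j} D(j)$. So it suffices to construct coproducts and coequalizers explicitly and verify their universal properties. The guiding principle throughout is that a morphism is determined, up to the equivalence of interpretations, by its action on the function and predicate symbols of the source; hence each universal property reduces to a statement about where symbols are sent.

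For coproducts, given a family $\{ \mathbb{T}_i \}_{i \in I}$ with $\mathbb{T}_i = (\mathcal{F}_i, \mathcal{P}_i, \mathcal{A}_i)$, I would take $\coprod_i \mathbb{T}_i = (\coprod_i \mathcal{F}_i, \coprod_i \mathcal{P}_i, \coprod_i \mathcal{A}_i)$, the disjoint unions of the signatures and of the axiom sets, each symbol retaining its sorts from the common set $\mathcal{S}$. The injection $\iota_i$ sends each symbol of $\mathbb{T}_i$ to the corresponding symbol of the disjoint union; it is a legitimate interpretation because every axiom of $\mathbb{T}_i$ is literally an axiom of the coproduct, hence derivable. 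Given interpretations $g_i \colon \mathbb{T}_i \to \mathbb{T}'$, the mediating interpretation sends a symbol originating in $\mathbb{T}_i$ to its image under $g_i$; this carries each axiom of the coproduct to a derivable sequent, and it is the unique interpretation up to equivalence with $g \circ \iota_i = g_i$, since its values on symbols are forced. The empty coproduct is the initial theory $(\emptyset, \emptyset, \emptyset)$.

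For coequalizers, let $f, g \colon \mathbb{T} \to \mathbb{T}'$ be a parallel pair. I would set $\mathbb{T}'' = (\mathcal{F}', \mathcal{P}', \mathcal{A}' \cup \mathcal{A}_0)$, keeping the signature of $\mathbb{T}'$ and adjoining to $\mathcal{A}'$ a set $\mathcal{A}_0$ of axioms forcing $f$ and $g$ to coincide: for each function symbol $\sigma$ of $\mathbb{T}$ the sequents expressing $f(\sigma) \cong g(\sigma)$, and for each predicate symbol $P$ of $\mathbb{T}$ the sequents expressing $f(P) \ssststile{}{V} g(P)$ over the appropriate context $V$. Unfolding the abbreviations for $\cong$ and $\ssststile{}{V}$, each of these is a pair of genuine Horn sequents over the signature of $\mathbb{T}'$, so $\mathbb{T}''$ is a bona fide object of $\Th_\mathcal{S}$, and the coequalizing map $q \colon \mathbb{T}' \to \mathbb{T}''$ is the identity interpretation on the signature, with $q \circ f = q \circ g$ holding precisely by $\mathcal{A}_0$. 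For the universal property, if $h \colon \mathbb{T}' \to \mathbb{T}'''$ satisfies $h \circ f = h \circ g$, then since $q$ is the identity on symbols the only candidate factorization is $h$ itself, which settles uniqueness; what must be checked is that this interpretation carries the axioms of $\mathcal{A}_0$ to derivable sequents. Here I would invoke the compatibility of composition with the extension to terms and formulas established earlier in the section: $h(f(\sigma))$ is equivalent to $(h \circ f)(\sigma)$ and $h(g(\sigma))$ to $(h \circ g)(\sigma)$, and $h \circ f = h \circ g$ makes these equivalent restricted terms, with the predicate case analogous.

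I expect the coequalizer step to be the main obstacle: the delicate part is confirming that the equivalence conditions $f(\sigma) \cong g(\sigma)$ and $f(P) \ssststile{}{V} g(P)$ genuinely unwind into Horn sequents over the signature of $\mathbb{T}'$, and that the compatibility-of-composition fact applies verbatim to discharge the adjoined axioms. By contrast, the coproduct construction and the reduction to coproducts and coequalizers are entirely routine.
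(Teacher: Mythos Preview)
Your proposal is correct and follows essentially the same approach as the paper: both reduce to coproducts and coequalizers, build coproducts as disjoint unions of signatures and axiom sets, and build coequalizers by keeping the target signature and adjoining the axioms $f(\sigma) \cong g(\sigma)$ and $f(R) \ssststile{}{} g(R)$, with the coequalizing map acting as the identity on symbols. Your treatment is slightly more explicit about why the adjoined axioms are genuine Horn sequents and why $h \circ f = h \circ g$ discharges them, but the argument is the same.
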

\begin{proof}
First, let $\{ \mathbb{T}_i \}_{i \in S} = \{ (\mathcal{F}_i,\mathcal{P}_i,\mathcal{A}_i) \}_{i \in S}$ be a set of theories.
Then we can define its coproduct $\coprod\limits_{i \in S} \mathbb{T}_i$ as the theory
$(\coprod\limits_{i \in S} \mathcal{F}_i, \coprod\limits_{i \in S} \mathcal{P}_i, \coprod\limits_{i \in S} \mathcal{A}_i)$.
Morphisms $f_i : \mathbb{T}_i \to \coprod\limits_{i \in S} \mathbb{T}_i$ are defined in the obvious way.
It is easy to see that the universal property of coproducts holds.

Now, let $f,g : \mathbb{T}_1 \to \mathbb{T}_2$ be a pair of morphisms of theories.
Then we can define their coequalizer $\mathbb{T}$ as the theory with the same set of function and predicate symbols as $\mathbb{T}_2$ and the set of axioms which consists of the axioms of $\mathbb{T}_2$
together with $\sststile{}{x_1, \ldots x_n} f(\sigma(x_1, \ldots x_n)) \cong g(\sigma(x_1, \ldots x_n))$ for each function symbols $\sigma$ of $\mathbb{T}_1$
and $f(R(x_1, \ldots x_n)) \ssststile{}{x_1, \ldots x_n} g(R(x_1, \ldots x_n))$ for each predicate symbols $R$ of $\mathbb{T}_1$.
Then we can define $e : \mathbb{T}_2 \to \mathbb{T}$ as the identity function on terms and formulas.
By construction, we have $e \circ f = e \circ g$.
If $h : \mathbb{T}_2 \to X$ is such that $h \circ f = h \circ g$, then it extends to a morphism $\mathbb{T} \to X$ since additional axioms are preserved by the assumption on $h$.
This extension is unique since $e$ is an epimorphism.
\end{proof}

\begin{prop}[func-mod]
For every morphism of theories $f : \mathbb{T} \to \mathbb{T}'$, there is a faithful functor $f^* : \Mod{T'} \to \Mod{T}$
such that $id_\mathbb{T}^*$ is the identity functor and $(g \circ f)^* = f^* \circ g^*$.
\end{prop}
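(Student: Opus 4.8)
The plan is to define $f^*$ on a model $M$ of $\mathbb{T}'$ by keeping the same underlying $\mathcal{S}$-set $\{M_s\}_{s\in\mathcal{S}}$ (this is legitimate since both theories have the same sorts) and using the data of the interpretation $f$ to interpret the symbols of $\mathbb{T}$. Concretely, for a function symbol $\sigma : s_1\times\ldots\times s_k\to s$ of $\mathbb{T}$ with $f(\sigma)=t'|_\varphi$, I would let $f^*(M)(\sigma)$ be the partial function sending $(a_1,\ldots,a_k)$ to the value of $t'$ under the assignment $x_i\mapsto a_i$, defined precisely when $M\models\varphi$ at that assignment and $t'$ is defined there; for a predicate symbol $P$ with $f(P)=\varphi'$, I would let $f^*(M)(P)$ be the set of tuples satisfying $\varphi'$ in $M$. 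First, though, one must check independence of the chosen representative of the equivalence class $f$: if $f,f'$ are equivalent interpretations then $f(\sigma)$ and $f'(\sigma)$ are equivalent restricted terms and $f(P)$, $f'(P)$ are equivalent formulas, and since equivalent restricted terms (resp. formulas) have the same interpretation in any model, $f^*(M)$ is well defined.

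The technical heart is an interpretation lemma, proved by induction on syntax: for every term $t$ of $\mathbb{T}$, the partial element denoted by $t$ in $f^*(M)$ under any assignment equals the partial element denoted by the restricted term $f(t)$ in $M$, and for every formula $\chi$ of $\mathbb{T}$ one has $f^*(M)\models\chi$ iff $M\models f(\chi)$. The base cases are variables and the atomic formulas $t_1=t_2$; the inductive step for $t=\sigma(t_1,\ldots,t_k)$ uses the definition of $f(t)$ as $t'[t'_1/x_1,\ldots,t'_k/x_k]|_{\varphi[t'_1/x_1,\ldots,t'_k/x_k]\land\varphi_1\land\ldots\land\varphi_k}$ together with the standard substitution property of interpretations, and the cases $R(t_1,\ldots,t_k)$ and conjunctions for formulas are analogous. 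I expect the main obstacle to be the careful bookkeeping of definedness in this induction: because terms denote \emph{partial} functions, the definedness condition attached to $f(t)$ must be shown to coincide exactly with definedness of $t$ in $f^*(M)$, and the restricted-term calculus set up earlier in the section is precisely what makes these conditions match.

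Granting the lemma, $f^*(M)$ is a model of $\mathbb{T}$: for each axiom $\varphi\sststile{}{V}\psi$ of $\mathbb{T}$, condition (3) in the definition of an interpretation gives that $f(\varphi)\sststile{}{V}f(\psi)$ is derivable in $\mathbb{T}'$, hence holds in $M$ by soundness of partial Horn logic, and the lemma transports this to the statement that $\varphi\sststile{}{V}\psi$ holds in $f^*(M)$. On morphisms I would send a homomorphism $h:M\to N$ of $\mathbb{T}'$-models to the same family of underlying functions $\{h_s\}$; the lemma shows that $h$ commutes with each $f^*(M)(\sigma)$ and preserves each $f^*(M)(P)$, since these are built from the operations and relations of $\mathbb{T}'$, with which $h$ is compatible, so $f^*(h)$ is a homomorphism of $\mathbb{T}$-models. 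Functoriality in $h$ is immediate because $f^*$ does not alter the underlying functions, and for the same reason $f^*$ is faithful: $f^*(h)=f^*(h')$ forces $h=h'$.

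Finally, the two stated equalities. The functor $id_\mathbb{T}^*$ is the identity because the identity interpretation sends $\sigma$ to the restricted term $\sigma(x_1,\ldots,x_k)$ and $P$ to $P(x_1,\ldots,x_k)$, so $id_\mathbb{T}^*(M)$ interprets each symbol exactly as $M$ does. For $(g\circ f)^*=f^*\circ g^*$ with $f:\mathbb{T}\to\mathbb{T}'$, $g:\mathbb{T}'\to\mathbb{T}''$ and $M\in\Mod{\mathbb{T}''}$, I would compare interpretations of each symbol: $f^*(g^*(M))(\sigma)$ is the interpretation of the restricted term $f(\sigma)$ in $g^*(M)$, which by the lemma applied to $g$ and to $f(\sigma)$ equals the interpretation of $g(f(\sigma))$ in $M$; since $g(f(\sigma))$ and $(g\circ f)(\sigma)$ are equivalent restricted terms, as recorded just before the proposition, this is the interpretation of $(g\circ f)(\sigma)$ in $M$, namely $(g\circ f)^*(M)(\sigma)$. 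The predicate case is identical, and since both functors agree on underlying $\mathcal{S}$-sets and leave morphisms unchanged, the two functors coincide.
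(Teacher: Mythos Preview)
Your proposal is correct and follows essentially the same approach as the paper: define $f^*(M)$ to have the same underlying $\mathcal{S}$-set as $M$ and interpret each symbol $S$ of $\mathbb{T}$ by $M(f(S))$, then observe that morphisms and the functoriality conditions follow. The paper's proof is a terse two-line sketch that leaves everything to ``it is easy to see'', whereas you have spelled out the interpretation lemma, the well-definedness on equivalence classes, and the verification of $(g\circ f)^*=f^*\circ g^*$ via the equivalence of $g(f(\sigma))$ and $(g\circ f)(\sigma)$ --- all of which is the content the paper elides.
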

\begin{proof}
If $M$ is a model of $\mathbb{T}'$, then $f^*(M)$ equals to $M$ as an $\mathcal{S}$-set.
For every symbol $S$ of $\mathbb{T}'$, we define $f^*(M)(S)$ as $M(f(S))$.
Then every morphism of models $M$ and $N$ of $\mathbb{T}'$ is also a morphism of $f^*(M)$ and $f^*(N)$.
These definitions determine a faithful functor $f^* : \Mod{T'} \to \Mod{T}$.
It is easy to see that these functors satisfy the required conditions.
\end{proof}

\section{Theory of substitutions}
\label{sec:T1}

In this section we define an example of partial Horn theories $\substTh$, which we call the theory of substitutions.
We also prove that the category of models of this theory is equivalent to the category of contextual categories
We will use this theory later to define algebraic dependent type theories.

\subsection{Definition of $\substTh$}
\label{sec:T1-def}

Let $\mathcal{C} = \{ ctx, tm \} \times \mathbb{N}$ be the set of sorts.
We will write $(ty,n)$ for $(ctx,n+1)$.
Sort $(tm,n)$ represents terms in contexts of length $n$, sort $(ctx,n)$ represents contexts of length $n$, and sort $(ty,n)$ represents types in contexts of length $n$.

There are two ways to define substitution: either to substitute the whole context (full substitution) or only a part of it (partial substitution).
Using ordinary type theoretic syntax the full substitution can be described by the following inference rule:
\begin{center}
\AxiomC{$A_1, \ldots A_n \vdash A\ type$}
\AxiomC{$\Gamma \vdash a_1 : A_1[]$ \quad \ldots \quad $\Gamma \vdash a_n : A_n[a_1, \ldots a_{n-1}]$}
\BinaryInfC{$\Gamma \vdash A[a_1, \ldots a_n]\ type$}
\DisplayProof
\end{center}
\medskip
The partial substitution is described by the following inference rule:
\begin{center}
\AxiomC{$\Gamma, A_1, \ldots A_n \vdash A\ type$}
\AxiomC{$\Gamma \vdash a_1 : A_1$ \quad \ldots \quad $\Gamma \vdash a_n : A_n[a_1, \ldots a_{n-1}]$}
\BinaryInfC{$\Gamma \vdash A[a_1, \ldots a_n]\ type$}
\DisplayProof
\end{center}
\medskip
The partial substitution was used in \cite{b-systems}, but we will use the full version since it is stronger.
To make these operations equivalent, we need to add another operation to the partial substitution, and even more axioms.
Thus our approach seems to be somewhat more convenient.

The set of function symbols of $\substTh$ consists of the following symbols:
\begin{align*}
\emptyCtx     & : (ctx,0) \\
ft_n          & : (ty,n) \to (ctx,n) \\
ty_n          & : (tm,n) \to (ty,n) \\
v_{n,i}       & : (ctx,n) \to (tm,n) \text{, } 0 \leq i < n \\
subst_{p,n,k} & : (ctx,n) \times (p,k) \times (tm,n)^k \to (p,n) \text{, } p \in \{ tm, ty \}
\end{align*}

Let $ft^i_n : (ctx,n+i) \to (ctx,n)$ and $ctx_{p,n} : (p,n) \to (ctx,n)$ be the following derived operations:
\begin{align*}
ft^0_n(A) & = A \\
ft^{i+1}_n(A) & = ft^i_n(ft_{n+i}(A)) \\
ctx_{ty,n}(t) & = ft_n(t) \\
ctx_{tm,n}(t) & = ft_n(ty_n(t))
\end{align*}

Auxiliary predicates $Hom_{n,k} : (ctx,n) \times (ctx,k) \times (tm,n)^k$ are defined as follows: $Hom_{n,k}(B, A, a_1, \ldots a_k)$ holds if and only if
\[ ty_n(a_i) = subst_{ty,n,i-1}(B, ft^{k-i}_i(A), a_1, \ldots a_{i-1}) \text{ for each } 1 \leq i \leq k \]
The idea is that a tuple of terms should represent a morphism in a contextual category.
So $Hom_{n,k}(B, A, a_1, \ldots a_k)$ holds if and only if $(a_1, \ldots a_k)$ is a morphism with domain $A$ and codomain $B$.
Note that if $Hom_{n,k}(B, A, a_1, \ldots a_k)$, then $ft_n(ty_n(a_i)) = B$.

The set of axioms of $\substTh$ consists of the axioms asserting that $(ctx,0)$ is trivial and the axioms we list below.
The following axioms describe when functions are defined:
\begin{align}
\label{ax:def-var}
                                             & \sststile{}{A}           v_{n,i}(A) \downarrow \\
\label{ax:def-subst}
Hom_{n,k}(B, ctx_{p,k}(a), a_1, \ldots a_k)  & \ssststile{}{B, a, a_i}  subst_{p,n,k}(B, a, a_1, \ldots a_k) \downarrow
\end{align}

The following axioms describe the ``typing'' of the constructions we have:
\begin{align}
\label{ax:type-var}
& \sststile{}{A} ty_n(v_{n,i}(A)) = subst_{ty,n,n-i-1}(A, ft^i_{n-i}(A), v_{n,n-1}(A), \ldots v_{n,i+1}(A)) \\
\label{ax:type-subst-ty}
& Hom_{n,k}(B, ft_k(A), a_1, \ldots a_k) \sststile{}{B, A, a_i} ft_n(subst_{ty,n,k}(B, A, a_1, \ldots a_k)) = B \\
\label{ax:type-subst-tm}
& \sststile{}{B, a, a_i} ty_n(subst_{tm,n,k}(B, a, a_1, \ldots a_k)) \cong subst_{ty,n,k}(B, ty_k(a), a_1, \ldots a_k)
\end{align}

The following axioms prescribe how $subst_{p,n,k}$ must be defined on indices ($v_{n,i}$):
\begin{align}
\label{ax:subst-var}
& \sststile{}{a}         subst_{p,n,n}(ctx_{p,n}(a), a, v_{n,n-1}(ctx_{p,n}(a)), \ldots v_{n,0}(ctx_{p,n}(a))) = a \\
\label{ax:var-subst}
& Hom_{n,k}(B, A, a_1, \ldots a_k) \sststile{}{B, a_i, A} subst_{tm,n,k}(B, v_{k,i}(A), a_1, \ldots a_k) = a_{k-i}
\end{align}

The last axiom say that substitution must be ``associative'':
\begin{align}
\label{ax:subst-subst}
& Hom_{n,k}(C, B, b_1, \ldots b_k) \land Hom_{k,m}(B, ctx_{p,m}(a), a_1, \ldots a_m) \sststile{}{C, b_i, B, a_i, a} \\ \notag
& subst_{p,n,k}(C, subst_{p,k,m}(B, a, a_1, \ldots a_m), b_1, \ldots b_k) = \\ \notag
& subst_{p,n,m}(C, a, subst_{tm,n,k}(C, a_1, b_1, \ldots b_k), \ldots subst_{tm,n,k}(C, a_m, b_1, \ldots b_k))
\end{align}

\subsection{Models of $\substTh$}

Here we show that the category of models of $\substTh$ is equivalent to the category of contextual categories.
First, we construct a functor $F : \Mod{\substTh} \to \ccat$.
Let $M$ be a model of $\substTh$.
Then the set of objects of level $n$ of $F(M)$ is $M_{(ctx,n)}$.
For each $A \in M_{(ctx,n)}$, $B \in M_{(ctx,k)}$ morphisms from $A$ to $B$ are tuples $(a_1, \ldots a_k)$ such that $a_i \in M_{(tm,n)}$ and $Hom_{n,k}(A, B, a_1, \ldots a_k)$.

For each $0 \leq i \leq n$ axiom~\eqref{ax:type-var} implies
\[ \sststile{}{A} Hom_{n,n-i}(A, ft^i_{n-i}(A), v_{n,n-1}(A), \ldots v_{n,i}(A)). \]
For each $A \in M_{(ctx,n)}$ we define $id_A : A \to A$ as tuple
\[ (v_{n,n-1}(A), \ldots v_{n,0}(A)) \]
and $p_A : A \to ft(A)$ as tuple
\[ (v_{n,n-1}(A), \ldots v_{n,1}(A)). \]

Now, we introduce some notation.
If $B \in M_{(ctx,n)}$, $a \in M_{(p,k)}$, and $f = (a_1, \ldots a_k) : B \to ctx_{p,k}(a)$ is a morphism, then we define $a[f] \in M_{(p,n)}$ as $subst_{p,n,k}(B, a, a_1, \ldots a_k)$.
By axiom \eqref{ax:def-subst} this construction is total.

If $A \in M_{(ctx,n)}$, $B \in M_{(ctx,k)}$, $C \in M_{(ctx,m)}$, $f : A \to B$, and $(c_1, \ldots c_m) : B \to C$,
    then we define composition $(c_1, \ldots c_m) \circ f$ as $(c_1[f], \ldots c_m[f])$.
The following sequence of equations shows that $(c_1, \ldots c_m) \circ f : A \to C$.
\begin{align*}
ty_n(c_i[f]) & = \text{(by axiom~\eqref{ax:type-subst-tm})} \\
ty_k(c_i)[f] & = \text{(since $Hom_{k,m}(c_1, \ldots c_m)$)} \\
ft^{m-i}_i(C)[c_1, \ldots c_{i-1}][f] & = \text{(by axiom~\eqref{ax:subst-subst})} \\
ft^{m-i}_i(C)[c_1[f], \ldots c_{i-1}[f]] &
\end{align*}

With these notations we can rewrite axioms \eqref{ax:type-subst-tm}, \eqref{ax:subst-var} and \eqref{ax:subst-subst} as follows:
\begin{align*}
ty_n(a[f]) & = A[f] \\
\text{ for each } f : B \to ft_k(A) & \text{, where } A = ty_k(a) \\
a[id_{ctx_{p,n}(a)}] & = a \\
a[g][f] & = a[g \circ f] \\
\text{ for each } f : C \to B \text{ and } & g : B \to ctx_{p,m}(a)
\end{align*}

Associativity of the composition follows from axiom~\eqref{ax:subst-subst}, and the fact that $id$ is identity for it follows from axioms \eqref{ax:subst-var} and \eqref{ax:var-subst}.

For every $A \in M_{(ty,k)}$ there is a bijection $\varphi$ between the set of $a \in M_{(tm,k)}$ such that $ty_k(a) = A$
    and the set of morphisms $f : ft_k(A) \to A$ such that $p_A \circ f = id_{ft_k(A)}$.
For every such $a \in M_{(tm,k)}$ we define $\varphi(a)$ as
\[ (v_{k,k-1}(ft_k(A)), \ldots v_{k,0}(ft_k(A)), a). \]
Note that if $(a_1, \ldots a_{k+1}) : B \to A$ is a morphism, then axiom~\eqref{ax:var-subst} implies that $p_A \circ (a_1, \ldots a_{k+1})$ equals to $(a_1, \ldots a_k)$.
Thus $\varphi(a)$ is a section of $p_A$.
Clearly, $\varphi$ is injective.
Let $f : ft_k(A) \to A$ be a section of $p_A$; then first $k$ components of $f$ must be identity on $ft_k(A)$.
So if $a$ is the last component of $f$, then $\varphi(a)$ equals to $f$.
Hence $\varphi$ is bijective.

If $A \in M_{(ty,k)}$, $B \in M_{(ctx,n)}$, and $f = (a_1, \ldots a_k) : B \to ft_k(A)$, then we define $f^*(A)$ as $A[f] = subst_{ty,n,k}(B, A, a_1, \ldots a_k)$.
Map $q(f,B)$ defined as the tuple with $i$-th component equals to
\[ \left\{
  \begin{array}{lr}
    a_i[v_{n+1,n}(A[f]), \ldots v_{n+1,1}(A[f])] & \text{ if } 1 \leq i \leq k \\
    v_{n+1,0}(A[f])                              & \text{ if } i = k+1
  \end{array}
\right. \]
Now we have the following commutative square:
\[ \xymatrix{ A[f] \ar[r]^-{q(f,A)} \ar[d]_{p_{A[f]}} & A \ar[d]^{p_A} \\
              B \ar[r]_-f                             & ft_k(A)
            } \]
We need to prove that this square is Cartesian.
By proposition~2.3 of \cite{c-systems} it is enough to construct a section $s_{f'} : B \to A[f]$ of $p_{A[f]}$
    for each $f' = (a_1, \ldots a_k, a_{k+1}) : B \to A$ and prove a few properties of $s_{f'}$.
We define $s_{f'}$ to be equal to $\varphi(a_{k+1})$.
Axioms \eqref{ax:var-subst} and \eqref{ax:subst-subst} implies that $q(f,B) \circ s_{f'} = f$.
To complete the proof that the square above is Cartesian we need, for every $g : ft_k(A) \to ft_m(C)$ and $A = C[g]$, prove that $s_{f'} = s_{q(g,C) \circ f'}$.
The last component of $q(g,C) \circ f'$ equals to $v_{n+1,0}(C[g])[f'] = a_{k+1}$.
Thus the last components of $q(g,C) \circ f'$ and $f'$ coincide, hence $s_{f'} = s_{q(g,C) \circ f'}$.

We are left to prove that operations $A[f]$ and $q(f,A)$ are functorial.
Equations $A[id_{ft_k(A)}] = A$ and $A[f \circ g] = A[f][g]$ are precisely axioms \eqref{ax:subst-var} and \eqref{ax:subst-subst}.
The fact that $q(id_{ft_k(A)}, A) = id_A$ follows from axiom~\ref{ax:var-subst}.
Now let $g : C \to B$ and $f : B \to ft_k(A)$ be morphisms; we need to show that $q(f \circ g, A) = q(f,A) \circ q(g,A[f])$.
The last component of $q(f,A) \circ q(g,A[f])$ equals to $v_{n+1,0}(A[f])[q(g,A[f])] = v_{m+1,0}(A[f][g])$,
    which equals to the last component of $q(f \circ g, A)$, namely $v_{m+1,0}(A[f \circ g])$.
If $1 \leq i \leq k$, then $i$-th component of $q(f,A) \circ q(g,A[f])$ equals to
\[ a_i[v_{n+1,n}(A[f]), \ldots v_{n+1,1}(A[f])][q(g,A[f])] = a_i[b_1', \ldots b_n'] \]
where $a_i$ is $i$-th component of $f$, $b_i$ is $i$-th component of $g$, and $b_i'$ equals to $b_i[v_{m+1,m}(A[f][g]), \ldots v_{m+1,1}(A[f][g])]$.
$i$-th component of $q(f \circ g, A)$ equals to
\[ a_i[g][v_{m+1,m}(A[f \circ g]), \ldots v_{m+1,1}(A[f \circ g])] = a_i[b_1'', \ldots b_n''], \]
where $b_i'' = b_i[v_{m+1,m}(A[f \circ g]), \ldots v_{m+1,1}(A[f \circ g])]$.
Thus $q(f \circ g, A) = q(f,A) \circ q(g,A[f])$.
This completes the construction of contextual category $F(M)$.

\begin{prop}[T1-CCat]
$F$ is functorial, and functor $F : \Mod{\substTh} \to \ccat$ is an equivalence of categories.
\end{prop}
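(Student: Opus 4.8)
The plan is to prove the statement in three stages: establish the functoriality of $F$, construct a pseudo-inverse functor $G : \ccat \to \Mod{\substTh}$, and exhibit natural isomorphisms $G \circ F \cong id$ and $F \circ G \cong id$. Throughout I write $\mathbb{C}$ for a contextual category, reserving $\mathcal{C}$ for the set of sorts as above.

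First I would verify functoriality. A morphism $\phi : M \to N$ in $\Mod{\substTh}$ is a family of functions $\{\phi_s : M_s \to N_s\}_{s \in \mathcal{C}}$ commuting with every operation symbol and preserving every predicate. Define $F(\phi)$ to act as $\phi_{(ctx,n)}$ on objects of level $n$ and componentwise via $\phi_{(tm,n)}$ on morphisms, which are tuples of terms. Because $\phi$ commutes with $ft_n$, $ty_n$, $v_{n,i}$, and $subst_{p,n,k}$ and preserves $Hom_{n,k}$, the map $F(\phi)$ sends morphisms to morphisms and preserves identities, composition, the $ft$ operation, and the chosen pullback maps $q(f,A)$; hence it is a morphism of contextual categories. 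Preservation of identities and composition by $F$ itself is then immediate, since $F(\phi)$ is built by applying the $\phi_s$ sortwise.

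Next I would construct $G$. Given a contextual category $\mathbb{C}$, set $G(\mathbb{C})_{(ctx,n)}$ to be the objects of level $n$, and $G(\mathbb{C})_{(tm,n)}$ to be the set of sections of projection maps, that is, pairs $(A,s)$ with $A$ an object of level $n+1$ and $s : ft(A) \to A$ a section of $p_A$. Interpret $ft_n$ as the contextual $ft$, put $ty_n(A,s) = A$, interpret the variables $v_{n,i}$ as the terms extracted from the canonical projections via the $q$-maps, and interpret $subst_{p,n,k}$ as the reindexing operation $f^{*}$ together with its induced action on sections. Under this dictionary the axioms become exactly the contextual-category laws: \eqref{ax:type-var} encodes the typing of variables along the tower of projections, \eqref{ax:type-subst-ty} and \eqref{ax:type-subst-tm} the compatibility of $f^{*}$ with $ft$ and $ty$, \eqref{ax:subst-var} and \eqref{ax:var-subst} the unit laws for substitution on variables, and \eqref{ax:subst-subst} the functoriality of substitution. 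Verifying these, together with functoriality of $G$ on contextual morphisms, is the technical heart of the argument.

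Finally I would assemble the equivalence. The isomorphism $F \circ G \cong id_{\ccat}$ is essentially an identity: by construction $F(G(\mathbb{C}))$ has the same graded objects and the same morphisms as $\mathbb{C}$. For $G \circ F \cong id_{\Mod{\substTh}}$ the key tool is the bijection $\varphi$ constructed just before the proposition, which identifies the term set $M_{(tm,k)}$ with the sections of $p_A$; this is precisely the set $G(F(M))_{(tm,k)}$, so $\varphi$ assembles into an isomorphism of models, natural in $M$ because $\varphi$ is defined uniformly from the operation symbols. The main obstacle is the middle stage: one must carefully unwind each variable $v_{n,i}$ as an iterated composite of the canonical sections coming from the $q$-maps and check that it interacts correctly with $f^{*}$, so that the substitution axioms, most delicately the associativity axiom \eqref{ax:subst-subst}, hold on the nose in $G(\mathbb{C})$ rather than merely up to isomorphism.
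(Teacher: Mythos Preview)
Your proposal is essentially correct, but it is organized differently from the paper's proof, and there is one point where you underestimate the work.

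The paper does not build an explicit pseudo-inverse $G$ and then check two natural isomorphisms. Instead it shows directly that $F$ is faithful, full, and essentially surjective. Faithfulness and fullness are short arguments using the bijection $\varphi$ between terms and sections of $p_A$; essential surjectivity is the long part, and it amounts to exactly your construction of $G(\mathbb{C})$ on objects: one defines $M_{(ctx,n)} = Ob_n(\mathbb{C})$, $M_{(tm,n)}$ as pairs $(A,s)$ with $s$ a section of $p_A$, interprets $subst_{ty,n,k}$ by iterated chosen pullbacks (built by induction on $k$), interprets $v_{n,i}(A)$ as the diagonal section into $p^{i+1}(A)^*(ft^i_{n-i}(A))$, and verifies each axiom. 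So the technical core of the two arguments is the same; what differs is whether one packages the remaining work as ``fully faithful'' or as ``$G \circ F \cong id$''. Your packaging costs you the extra obligation of checking that $G$ is functorial and that the isomorphisms are natural, which the paper avoids.

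The one genuine oversimplification is your claim that $F \circ G \cong id_{\ccat}$ is ``essentially an identity''. It is not: a morphism in $F(G(\mathbb{C}))$ from $B$ to $A$ is a tuple $(a_1,\ldots,a_k)$ of sections satisfying the predicate $Hom_{n,k}$, whereas a morphism in $\mathbb{C}$ is an arrow $B \to A$. The paper constructs, by induction on $k$, a bijection $\alpha : Hom^M_{n,k}(B,A) \to Hom_{\mathbb{C}}(B,A)$ sending $(a_1,\ldots,a_{k+1})$ to $q(\alpha(a_1,\ldots,a_k),A) \circ a_{k+1}$, and then checks that $\alpha$ preserves composition and the operations $f^*$, $q(f,A)$. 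This is where the iterated pullback structure of a contextual category is actually matched with the iterated substitution structure of $\substTh$, and it is not automatic; you should flag it as part of the work rather than declaring the two categories to have ``the same morphisms''.
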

\begin{proof}
Given a map of $\substTh$ models $\alpha : M \to N$, we define a map of contextual categories $F(\alpha) : F(M) \to F(N)$.
$F(\alpha)$ is already defined on objects.
Let $f = (a_1, \ldots a_k) \in Hom_{n,k}(B,A)$.
We define $F(\alpha)(f)$ as $(\alpha(a_1), \ldots \alpha(a_k)) \in Hom_{n,k}(\alpha(B), \alpha(A))$.
$F(\alpha)$ preserves identity morphisms, compositions, $f^*(A)$, and $q(f,A)$ since all of these operations are defined in terms of $\substTh$ operations.
Clearly, $F$ preserves identity maps and compositions of maps of $\substTh$ models.
Thus $F$ is a functor.

First, note that if $a \in M_{(tm,k)}$ and $\alpha : M \to N$, then $F(\alpha)(\varphi(a)) = \varphi(\alpha(a))$.
Indeed, consider the following sequence of equations:
\begin{align*}
F(\alpha)(\varphi(a)) & = \\
F(\alpha)(v_{k,k-1}(ctx_{tm,k}(a)), \ldots v_{k,0}(ctx_{tm,k}(a)), a) & = \\
(v_{k,k-1}(ctx_{tm,k}(\alpha(a))), \ldots v_{k,0}(ctx_{tm,k}(\alpha(a))), \alpha(a)) & = \\
\varphi(\alpha(a)) & .
\end{align*}

Now, we prove that $F$ is faithful.
Let $\alpha,\beta : M \to N$ be a pair of maps of $\substTh$ models such that $F(\alpha) = F(\beta)$.
Then $\alpha$ and $\beta$ coincide on contexts.
Given $a \in M_{(tm,n)}$ we have the following equation: $\alpha(a) = \varphi^{-1}(F(\alpha)(\varphi(a))) = \varphi^{-1}(F(\beta)(\varphi(a))) = \beta(a)$.

Now, we prove that $F$ is full.
Let $\alpha : F(M) \to F(N)$ be a map of contextual categories.
Then we need to define $\beta : M \to N$ such that $F(\beta) = \alpha$.
If $A \in M_{(ctx,n)}$, then we let $\beta(A) = \alpha(A)$.
Note that if $f : ft_n(A) \to A$ is a section of $p_A$, then $\alpha(f)$ is a section of $\alpha(A)$.
If $a \in M_{(tm,n)}$, then we let $\beta(a) = \varphi^{-1}(\alpha(\varphi(a)))$.

Maps $F(\beta)$ and $\alpha$ agree on contexts.
We prove by induction on $k$ that they coincide on morphisms $f = (a_1, \ldots a_k) \in M(Hom_{n,k})(B,A)$.
If $k = 0$, then $F(A)$ is terminal objects, hence $F(\beta) = \alpha$.
Suppose $k > 0$ and consider the following equation: $f = q((a_1, \ldots a_{k-1}), A) \circ \varphi(a_k)$.
By induction hypothesis we know that $F(\beta)(q((a_1, \ldots a_{k-1}), A)) = \alpha(q((a_1, \ldots a_{k-1}), A))$.
Thus we only need to prove that $F(\beta)(\varphi(a_k)) = \alpha(\varphi(a_k))$.
But $F(\beta)(\varphi(a_k)) = \varphi(\beta(a_k)) = \varphi(\varphi^{-1}(\alpha(\varphi(a_k)))) = \alpha(\varphi(a_k))$.

Finally, we prove that $F$ is essentially surjective on objects.
Given contextual category $C$ we define $\substTh$ model $M$.
Let $M_{(ctx,n)}$ be equal to $Ob_n(C)$ and $M_{(tm,n)}$ be the set of pairs of objects $A \in Ob_{n+1}(C)$ and sections of $p_A : A \to ft_n(A)$.
Let $ty_n$ be the obvious projection.
We will usually identify $a \in M_{(tm,n)}$ with the section $ctx_{tm,n}(a) \to ty_n(a)$.

For each $n,k \in \mathbb{N}$ we define partial function
\[ subst_{ty,n,k} : M_{(ctx,n)} \times M_{(ty,k)} \times M_{(tm,n)}^k \to M_{(ty,n)} \]
such that $ft_n(subst_{ty,n,k}(B, A, a_1, \ldots a_k)) = B$.
We also define morphism
\[ q_{n,k} \in Hom_{n+1,k}(subst_{ty,n,k}(B, A, a_1, \ldots a_k), A) \]
whenever $subst_{ty,n,k}(B, A, a_1, \ldots a_k)$ is defined.
We define $subst_{ty,n,k}$ and $q_{n,k}$ by induction on $k$.
Let $subst_{ty,n,0}(B,A) = !_B^*(A)$ and $q_{n,0} = q(!_B,A)$ where $!_B : B \to Ob_0(C)$ is the unique morphism.
\[ \xymatrix{ subst_{ty,n,0}(B,A) \ar[r]^-{q_{n,0}} \ar[d] \pb & A \ar[d]^{p_A} \\
              B \ar[r]_{!_B} & 1
            } \]

Let $subst_{ty,n,k+1}(B, A, a_1, \ldots a_{k+1})$ be defined whenever $subst_{ty,n,k}(B, ft_k(A), \allowbreak a_1, \ldots a_k)$ is defined
    and $ty_n(a_{k+1}) = subst_{ty,n,k}(B, ft_k(A), a_1, \ldots a_k)$.
In this case we let $subst_{ty,n,k+1}(B, A, a_1, \ldots a_{k+1}) = f^*(A)$ and $q_{n,k+1} = q(f,A)$ where $f$ is the composition of $a_{k+1}$ and $q_{n,k}$.
\[ \xymatrix{ subst_{ty,n,k+1}(B, A, a_1, \ldots a_{k+1}) \ar[rr]^-{q_{n,k+1}} \ar[d] \pb & & A \ar[d]^{p_A} \\
              B \ar[r]_-{a_{k+1}} & ty_n(a_{k+1}) \ar[r]_-{q_{n,k}} & ft_k(A)
            } \]
It is easy to see by induction on $k$ that axiom~\eqref{ax:def-subst} holds.
Axiom~\eqref{ax:type-subst-ty} holds by definition of $subst_{ty,n,k}$.

The definition of predicates $Hom_{n,k}$ makes sense in $M$ now.
Thus we can define as before the set $Hom^M_{n,k}(B,A)$ of morphisms in $M$ as the set of tuples $(a_1, \ldots a_k)$ such that $Hom_{n,k}(B, A, a_1, \ldots a_k)$.
There is a bijection $\alpha : Hom^M_{n,k}(B,A) \to Hom_{n,k}(B,A)$ such that
    $subst_{ty,n,k}(B, A, a_1, \ldots a_k) = \alpha(a_1, \ldots a_k)^*(A)$ and $q_{n,k} = q(\alpha(a_1, \ldots a_k), A)$.
We define $\alpha$ by induction on $k$.
Both $Hom^M_{n,0}(B,A)$ and $Hom_{n,0}(B,A)$ are singletons, so there is a unique bijection between them.
If $(a_1, \ldots a_k) \in Hom^M_{n,k}(B,ft_k(A))$, then there is a bijection between morphisms $f \in Hom_{n,k+1}(B,A)$
    satisfying $p_A \circ f = \alpha(a_1, \ldots a_k)$ and sections of $p_{\alpha(a_1, \ldots a_k)^*(A)}$.
By induction hypothesis these sections are just sections of $p_{subst_{ty,n,k}(B, A, a_1, \ldots a_k)}$.
This gives us a bijection between $Hom^M_{n,k+1}(B,A)$ and $Hom_{n,k+1}(B,A)$, namely $\alpha(a_1, \ldots a_{k+1}) = q(\alpha(a_1, \ldots a_k), A) \circ a_{k+1}$.
Then the required equations hold by definition.

Now, we define total functions $v_{n,i} : M_{(ctx,n)} \to M_{(tm,n)}$.
Let $v_{n,i}(A)$ be equal to $(p^{i+1}(A)^*(ft^i_{n-i}(A)), s_{p^i_A})$.
\[ \xymatrix{ p^{i+1}(A)^*(ft^i_{n-i}(A)) \ar[r] \ar[d] \pb & ft^i_{n-i}(A) \ar[d]^{p_{ft^i_{n-i}(A)}} \\
              A \ar[r]_{p^{i+1}(A)} \ar@/^1pc/[u]^{s_{p^i_A}} \ar[ur]_{p^i_A} & ft^{i+1}_{n-i-1}(A)
            } \]
Axiom~\eqref{ax:def-var} holds by definition.
By induction on $n-i$ it is easy to see that $\alpha(v_{n,n-1}(A), \ldots v_{n,i}(A))$ equals to $p_A^i : A \to ft^i_{n-i}(A)$.
Axiom~\eqref{ax:type-var} follows from the following sequence of equations:
\begin{align*}
subst_{ty,n,n-i-1}(A, ft^i_{n-i}(A), v_{n,n-1}(A), \ldots v_{n,i+1}(A)) & = \\
\alpha(v_{n,n-1}(A), \ldots v_{n,i+1}(A))^*(ft^i_{n-i}(A)) & = \\
p^{i+1}(A)^*(ft^i_{n-i}(A)) & = \\
ty_n(v_{n,i}(A)) & .
\end{align*}
Axiom~\eqref{ax:subst-var} follows from the facts that $\alpha(v_{n,n-1}(ft_n(A)), \ldots v_{n,0}(ft_n(A))) = id_{ft_n(A)}$ and $id_{ft_n(A)}^*(A) = A$.

Now, we define partial functions $subst_{tm,n,k} : M_{(ctx,n)} \times M_{(tm,k)} \times M_{(tm,n)}^k \to M_{(tm,n)}$.
Function $subst_{tm,n,k}(B, a, a_1, \ldots a_k)$ is defined whenever \[ Hom_{n,k}(B, ctx_{tm,k}(a), a_1, \ldots a_k) \] holds.
In this case we let $subst_{tm,n,k}(B, a, a_1, \ldots a_k) = a[\alpha(a_1, \ldots a_k)]$ where $a[f] = s_{a \circ f}$.
Axioms \eqref{ax:def-subst} and \eqref{ax:type-subst-tm} hold by definition.
Axiom~\eqref{ax:subst-var} follows from the fact that $id_{ctx_{tm,n}(a)}^*(a) = a$.

To prove axiom~\eqref{ax:var-subst} note that $p_A \circ \alpha(a_1, \ldots a_{k+1}) = \alpha(a_1, \ldots a_k)$ by definition of $\alpha$.
Hence $p^i(A) \circ \alpha(a_1, \ldots a_k) = \alpha(a_1, \ldots a_{k-i})$.
Also note that $s_{\alpha(a_1, \ldots a_k)} = a_k$.
Now the axiom follows from the following equations:
\begin{align*}
subst_{tm,n,k}(B, v_{k,i}(A), a_1, \ldots a_k) & = \\
s_{v_{k,i}(A) \circ \alpha(a_1, \ldots a_k)} & = \\
s_{q(p^{i+1}(A), ft^i_{n-i}(A)) \circ v_{k,i}(A) \circ \alpha(a_1, \ldots a_k)} & = \\
s_{p^i(A) \circ \alpha(a_1, \ldots a_k)} & = \\
s_{\alpha(a_1, \ldots a_{k-i})} & = \\
a_{k-i} & .
\end{align*}

Now, we prove that $\alpha$ preserves compositions.
To do this we need to show that $\alpha(a_1, \ldots a_k) \circ f = \alpha(a_1[f], \ldots a_k[f])$.
We do this by induction on $k$.
For $k = 0$ it is trivial and for $k > 0$ we have the following sequence of equations:
\begin{align*}
\alpha(a_1, \ldots a_k) \circ f & = \\
q(\alpha(a_1, \ldots a_{k-1}), A) \circ a_k \circ f & = \\
q(\alpha(a_1, \ldots a_{k-1}), A) \circ q(f, B[\alpha(a_1, \ldots a_k)]) \circ a_k[f] & = \\
q(\alpha(a_1, \ldots a_{k-1}) \circ f, A) \circ a_k[f] & = \\
q(\alpha(a_1[f], \ldots a_{k-1}[f]), A) \circ a_k[f] & = \\
\alpha(a_1[f], \ldots a_k[f]) & .
\end{align*}

Now, axiom \eqref{ax:subst-subst} follows from the facts that $\alpha$ preserves compositions and $(f \circ g)^*(A) = f^*(g^*(A))$.
This completes the construction of $\substTh$ model $M$ from a contextual category $C$.
To finish the proof we need to show that $F(M)$ is isomorphic to $C$.
The isomorphism is given by bijection $\alpha$.
We already saw that $\alpha$ preserves the structure of contextual categories.
Thus $\alpha$ is a morphism of contextual categories, and it is easy to see that $\alpha^{-1}$ also preserves the structure.
Hence $\alpha$ is isomorphism and $F$ is an equivalence.
\end{proof}

Let $u : \substTh \to \mathbb{T}$ be an algebraic dependent type theory with substitution.
Then it follows from \rprop{func-mod} and \rprop{T1-CCat} that models of $\mathbb{T}$ are contextual categories with additional structure,
    where $u^* : \Mod{\mathbb{T}} \to \Mod{\substTh}$ is the forgetful functor.

\section{Algebraic dependent type theories}

In this section we consider partial Horn theories with additional structure which we call \emph{stable}.
We also define the category $\algtt$ of algebraic dependent type theories and give a few examples of such theories.

\subsection{Stable theories}

First, let us define prestable theories.
For every set $\mathcal{S}_0$, we define the corresponding set $\mathcal{S}$ of sorts as $\mathcal{S}_0 \times \mathbb{N}$.
We call elements of $\mathcal{S}_0$ \emph{basic sorts}.
Suppose that $\mathcal{S}_0$ contains a distinguished sort $ctx$.
Let $\mathbb{T}_{\mathcal{S}_0}$ be a theory with the following function symbols:
\begin{align*}
\emptyCtx  &: (ctx,0) \\
ft_n & : (ctx,n+1) \to (ctx,n) \\
ctx_{p,n} & : (p,n) \to (ctx,n) \text{ for every } p \in \mathcal{S}_0
\end{align*}
and the following axioms:
\begin{align*}
& \sststile{}{} \emptyCtx\!\downarrow \\
& \sststile{}{x} x = \emptyCtx \\
& \sststile{}{x} ctx_{ctx,n}(x) = x
\end{align*}

To define prestable theories, we need to introduce a few auxiliary constructions.
First, we define a function $L : \mathcal{C} \to \mathcal{C}$ as follows:
\begin{align*}
L(ctx,n) & = L(ctx,n+1) \\
L(tm,n) & = L(tm,n+1)
\end{align*}

For every set $\mathcal{F}$ of function symbols, we define another set $L(\mathcal{F})$ which consists of symbols $L(\sigma)$ for every $\sigma \in \mathcal{F}$.
If $\sigma : s_1 \times \ldots \times s_k \to s$, then $L(\sigma) : (ctx,1) \times L(s_1) \times \ldots \times L(s_k) \to L(s)$.
For every set of variables $V$ we define a set $L(V)$ which contains a variable $x$ of sort $L(s)$ for every variable $x$ of sort $s$ in $V$.
For every terms $\Gamma \in Term_{L(\mathcal{F})}(L(V))_{(ctx,1)}$ and $t \in Term_{\mathcal{F}}(V)_{(p,n)}$,
we define a restricted term $L(\Gamma,t) \in RTerm_{L(\mathcal{F})}(L(V))_{(p,n+1)}$ as follows:
\begin{align*}
L(\Gamma, x) & = x|_{L(ctx_{p,n})(\Gamma, x) \downarrow} \\
L(\Gamma, \sigma(t_1, \ldots t_k)) & = L(\sigma)(\Gamma, L(\Gamma, t_1), \ldots L(\Gamma, t_k))
\end{align*}

For every set $\mathcal{P}$ of relation symbols, we define set $L(\mathcal{P})$ which consists of symbols
    $L(R) : (ctx,1) \times L(s_1) \times \ldots \times L(s_k)$ for every $R \in \mathcal{P}$, $R : s_1 \times \ldots \times s_k$.
For every formula $\varphi \in Form_\mathcal{P}(V)$ and term $\Gamma \in Term_{L(\mathcal{F})}(L(V))_{(ctx,1)}$,
we define a formula $L(\Gamma, \varphi) \in Form_{L(\mathcal{P})}(L(V))$ as follows:
\begin{align*}
L(\Gamma, t_1 = t_2) & = (L(\Gamma, t_1) = L(\Gamma, t_2)) \\
L(\Gamma, R(t_1, \ldots t_k)) & = L(R)(\Gamma, L(\Gamma, t_1), \ldots L(\Gamma, t_k))
\end{align*}

Now, let us define a functor $L : \mathbb{T}_{\mathcal{S}_0}/\Th_\mathcal{S} \to \mathbb{T}_{\mathcal{S}_0}/\Th_\mathcal{S}$.
Let $L((\mathcal{S}, \mathcal{F}, \mathcal{P}), \mathcal{A}) = ((\mathcal{S}, L(\mathcal{F}) \cup \mathcal{F}_{\mathcal{S}_0}, L(\mathcal{P})), \mathcal{A}' \cup \mathcal{A}_{\mathcal{S}_0})$,
where $\mathcal{F}_{\mathcal{S}_0}$ and $\mathcal{A}_{\mathcal{S}_0}$ are the sets of function symbols and axioms of $\mathbb{T}_{\mathcal{S}_0}$, and $\mathcal{A}'$ consists of the following axioms:
\[ ft^n(ctx_{p,n+1}(x)) = \Gamma \sststile{}{\Gamma,x} ctx_{p,n+1}(x) = L(ctx_{p,n})(\Gamma,x) \]
for every $p \in \mathcal{S}_0$,
\begin{align*}
L(\sigma)(\Gamma, x_1, \ldots x_k)\!\downarrow & \sststile{}{\Gamma, x_1, \ldots x_k} ft^n(ctx_{p,n}(L(\sigma)(\Gamma, x_1, \ldots x_k))) = \Gamma \\
L(\sigma)(\Gamma, x_1, \ldots x_k)\!\downarrow & \sststile{}{\Gamma, x_1, \ldots x_k} ft^{n_i}(ctx_{p_i,n_i}(x_i)) = \Gamma
\end{align*}
for every $\sigma \in \mathcal{F}$, $\sigma : (p_1,n_1) \times \ldots \times (p_k,n_k) \to (p,n)$ and every $1 \leq i \leq k$,
\[ L(R)(\Gamma, x_1, \ldots x_k) \sststile{}{\Gamma, x_1, \ldots x_k} ft^{n_i}(ctx_{p_i,n_i}(x_1)) = \Gamma \]
for every $R \in \mathcal{P}$, $R : (p_1,n_1) \times \ldots \times (p_k,n_k)$ and every $1 \leq i \leq k$.

If $f : \mathbb{T} \to \mathbb{T}'$, then let $L(f) : L(\mathbb{T}) \to L(\mathbb{T}')$ be defined as follows:
\begin{align*}
L(f)(L(\sigma)(\Gamma, x_1, \ldots x_k)) & = L(\Gamma, f(\sigma(x_1, \ldots x_k))) \\
L(f)(L(R)(\Gamma, x_1, \ldots x_k)) & = L(\Gamma, f(R(x_1, \ldots x_k)))
\end{align*}
It is easy to see that this defines a morphism of theories and that $L$ preserves identity morphisms and compositions.

\begin{defn}
A \emph{prestable (essentially) algebraic theory} is an algebra for functor $L$,
that is a pair $(\mathbb{T},\alpha)$, where $\mathbb{T}$ is a theory under $\mathbb{T}_{\mathcal{S}_0}$ and $\alpha : L(\mathbb{T}) \to \mathbb{T}$.
The category $\PSt_{\mathcal{S}_0}$ of prestable theories is the category of algebras for $L$.
\end{defn}

\begin{defn}
A prestable theory is called \emph{stable} if the following theorem holds for every axiom $\varphi \sststile{}{x_1 : (p_1,n_1), \ldots x_k : (p_k,n_k)} \psi$ in $\mathcal{A}$:
\[ \alpha L(\Gamma,\varphi) \land \bigwedge_{1 \leq i \leq k} ft^{n_i}(ctx_{p_i,n_i}(x_i)) = \Gamma \sststile{}{\Gamma, x_1, \ldots x_k} \alpha L(\Gamma,\psi). \]
The category of stable theories is denoted by $\St_{\mathcal{S}_0}$.

Let $c$ be the prestable theory generated by a single constant $c : (ctx,1)$.
Then a prestable theory under $c$ is called \emph{$c$-stable} if the following sequents are derivable:
\begin{align*}
\alpha L(\sigma)(\Gamma, x_1, \ldots x_k)\!\downarrow & \sststile{}{\Gamma, x_1, \ldots x_k} \Gamma = c \\
\alpha L(R)(\Gamma, x_1, \ldots x_k) & \sststile{}{\Gamma, x_1, \ldots x_k} \Gamma = c \\
\alpha L(c,\varphi) \land \bigwedge_{1 \leq i \leq k} ft^{n_i}(ctx_{p_i,n_i}(x_i)) = c & \sststile{}{x_1, \ldots x_k} \alpha L(c,\psi)
\end{align*}
for every function symbol $\sigma$, every predicate symbol $R$, and every axiom $\varphi \sststile{}{x_1, \ldots x_k} \psi$.
The category of $c$-stable theories is denoted by $\cSt_{\mathcal{S}_0}$.
\end{defn}

The theory of substitutions is stable.
Indeed, we can define maps $\alpha : L(\substTh) \to \substTh$ as follows:
\begin{align*}
\alpha(L(ty_n)(\Gamma,a)) & = ty_{n+1}(a)|_{ft^n(ctx_{tm,n+1}(a)) = \Gamma} \\
\alpha(L(v_{n,i})(\Gamma,\Delta)) & = v_{n+1,i}(\Delta)|_{ft^n(\Delta) = \Gamma}
\end{align*}
and $\alpha(L(subst_{p,n,k})(\Gamma, \Delta, B, a_1, \ldots a_k))$ is defined as
\[ subst_{p,n+1,k+1}(\Delta, B, v_{n+1,n}(\Delta), a_1, \ldots a_k)|_{ft^n(\Delta) = \Gamma} \]

The construction of colimits in \rprop{th-cocomplete} implies that $L$ preserves colimits.
It follows that $\PSt_{\mathcal{S}_0}$ is cocomplete.
The categories of stable and $c$-stable theories are closed in $\PSt_{\mathcal{S}_0}$ under colimits.

\subsection{Contextual theories}

The definition of prestable theories has a disadvantage that terms contain a lot of redundant information.
For example, when we describe a term we need to repeat the context in which it is defined several times.
The following notion allows us to omit this redundant information as we discuss below.

\begin{defn}
Let $\mathbb{T}_b$ be a prestable theory.
A \emph{contextual theory under $\mathbb{T}_b$} is a prestable theory $\mathbb{T}$ such that the following conditions hold:
\begin{enumerate}
\item There exists a set of function symbols $\mathcal{F}_0$ (which we call \emph{basic function symbols}) such that
the set of function symbols of $\mathbb{T}$ consists of function symbols of $\mathbb{T}_b$ together with symbols
\[ \sigma_m : (ctx,m) \times (p_1,n_1+m) \times \ldots \times (p_k,n_k+m) \to (p,n+m) \]
for every $\sigma : (p_1,n_1) \times \ldots \times (p_k,n_k) \to (p,n) \in \mathcal{F}_0$ and $m \in \mathbb{N}$.
Moreover, if $\sigma : s_1 \times \ldots \times s_k \to s \in \mathcal{F}_0$, then $s \neq (ctx,0)$.
\item There exists a set of predicate symbols $\mathcal{P}_0$ (which we call \emph{basic predicate symbols}) such that
the set of predicate symbols of $\mathbb{T}$ consists of predicate symbols of $\mathbb{T}_b$ together with symbols
\[ R_m : (ctx,m) \times (p_1,n_1+m) \times \ldots \times (p_k,n_k+m) \]
for every $R : (p_1,n_1) \times \ldots \times (p_k,n_k) \in \mathcal{P}_0$ and $m \in \mathbb{N}$.
\item Every axiom of $\mathbb{T}_b$ is an axiom of $\mathbb{T}$.
\item $\alpha_\mathbb{T} : L(\mathbb{T}) \to \mathbb{T}$ is defined as follows:
\begin{align*}
\alpha_\mathbb{T}(L(\sigma_m)(\Gamma, \Delta, x_1, \ldots x_k)) & = \sigma_{m+1}(\Delta, x_1, \ldots x_k)|_{ctx^{n+m}(\Delta) = \Gamma} \\
\alpha_\mathbb{T}(L(R_m)(\Gamma, \Delta, x_1, \ldots x_k)) & = R_{m+1}(\Delta, x_1, \ldots x_k) \land ctx^{n+m}(\Delta) = \Gamma
\end{align*}
and for every symbol of $\mathbb{T}_b$, it is defined in the same way as in $\mathbb{T}_b$.
\end{enumerate}
\end{defn}

Since we can always infer the index $m$ for every function symbol $\sigma_m$ if we know its sort, we usually omit this index.
To specify the omitted argument, we use the following syntax: $\Gamma \vdash t$,
which stands for $\sigma(\Gamma, t_1, \ldots t_k)$ if $t = \sigma(t_1, \ldots t_k)$ and for $x|_{ctx(x) = \Gamma}$ if $t = x$.
Of course, if some arguments are omitted in $\Gamma$, then we need to know its context too in order to infer them.
Thus, we may write $A_1, \ldots A_n \vdash t$ which stands for $(\ldots ((\emptyCtx \vdash A_1) \vdash A_2) \ldots \vdash A_n) \vdash t$.
We also use this notation in formulas: $\Gamma \vdash t \equiv t'$ stands for $(\Gamma \vdash t) = (\Gamma \vdash t')$
and $\Gamma \vdash R(t_1, \ldots t_k)$ stands for $R(\Gamma, (\Gamma \vdash t_1), \ldots (\Gamma \vdash t_k))$.

Also, we use the standard notation: $\Gamma \vdash A\ type$ stands for $\Gamma \vdash A\!\downarrow$ if $A : (ty,n)$ and $\Gamma \vdash a : A$ stands for $ty(\Gamma \vdash a) = (\Gamma \vdash A)$.
Sequents $\varphi_1 \land \ldots \land \varphi_n \sststile{}{V} \psi$ and $\varphi_1 \land \ldots \land \varphi_n \ssststile{}{V} \psi$ are written as
\medskip
\begin{center}
\AxiomC{$\varphi_1$}
\AxiomC{$\ldots$}
\AxiomC{$\varphi_n$}
\TrinaryInfC{$\psi$}
\DisplayProof
\qquad
and
\qquad
\AxiomC{$\varphi_1$}
\AxiomC{$\ldots$}
\AxiomC{$\varphi_n$}
\doubleLine
\RightLabel{,}
\TrinaryInfC{$\psi$}
\DisplayProof
\end{center}
respectively.

Finally, we use the following syntax:
\[ \sigma(A^1_1, \ldots A^1_{n_1}.\ b_1, \ldots A^k_1, \ldots A^k_{n_k}.\ b_k) \]
for a term of sort $(p,m+n)$ in a contextual theory, where $\sigma : (p_1,n_1) \times \ldots \times (p_k,n_k) \to (p,n)$,
$b_i$ is a term of sort $(p_i,m+n_i)$, and $A^i_j$ is a term of sort $(ty,m+j-1)$.
The expression $\Gamma \vdash \sigma(A^1_1, \ldots A^1_{n_1}.\ b_1, \ldots A^k, \ldots A^k_{n_k}.\ b_k)$ stands for
\[ \sigma_m(\Gamma, (\Gamma, A^1_1, \ldots A^1_{n_1} \vdash b_1), \ldots (\Gamma, A^k_1, \ldots A^k_{n_k} \vdash b_k)). \]
Of course, if some $b_i$ is a variable, then we can omit $A^i_1, \ldots A^i_{n_i}$.
We also can omit this context if there is a theorem of the following form: 
\[ E \vdash \sigma_m(x_1, \ldots x_k)\!\downarrow\ \sststile{}{E, x_1, \ldots x_k} E \vdash ctx(x_i) \equiv \Delta \]
for some $\Delta$ such that $x_i \notin FV(\Delta)$.
Then $A^i_1, \ldots A^i_{n_i}$ must be equal to $((\Gamma \vdash ft^{n_i-1}(\Delta)), \ldots (\Gamma \vdash \Delta))[\rho]$,
where $\rho(E) = \Gamma$ and $\rho(x_j) = (\Gamma, A^j_1, \ldots A^j_{n_j} \vdash b_j)$.

The following lemma shows that we can always replace a prestable theory with a contextual one.

\begin{lem}[stable-con]
Let $\mathbb{T}_b$ be a prestable theory.
Every prestable theory under $\mathbb{T}_b$ is isomorphic to a contextual theory under $\mathbb{T}_b$.
\end{lem}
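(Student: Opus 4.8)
The plan is, given a prestable theory $\mathbb{T}$ under $\mathbb{T}_b$ with structure map $\alpha : L(\mathbb{T}) \to \mathbb{T}$, to build a contextual theory $\mathbb{T}'$ whose indexed families $\sigma_m$, $R_m$ literally record the iterated liftings of the symbols of $\mathbb{T}$ under $\alpha \circ L$, and then to exhibit an isomorphism $\mathbb{T} \cong \mathbb{T}'$ in $\PSt_{\mathcal{S}_0}$. For the basic symbols I would take $\mathcal{F}_0$ (resp. $\mathcal{P}_0$) to be the function (resp. predicate) symbols of $\mathbb{T}$ that do not belong to $\mathbb{T}_b$; up to first replacing any symbol with codomain $(ctx,0)$ by the constant $\emptyCtx$ (harmless, since $(ctx,0)$ is terminal) we may assume no basic symbol has codomain $(ctx,0)$, as the definition of a contextual theory demands in clause (1). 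The symbols of $\mathbb{T}'$ are then those of $\mathbb{T}_b$ together with $\sigma_m$ and $R_m$ for all $m \in \mathbb{N}$, with the shifted signatures, and $\alpha_{\mathbb{T}'}$ is forced by clause (4).

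The technical heart is the definition, for each symbol $\sigma$ of $\mathbb{T}$ and each $m$, of an $m$-fold lift $\sigma^{(m)}$: a restricted term of $\mathbb{T}$ of the appropriate sort whose free variables are $\Gamma : (ctx,m)$ and $x_i : (p_i, n_i+m)$. I would define $\sigma^{(m)}$ by induction on $m$, with $\sigma^{(0)} = \sigma(x_1, \ldots, x_k)$ and $\sigma^{(m+1)}$ obtained from $\sigma^{(m)}$ by a single application of $\alpha \circ L$, collapsing the fresh length-one context produced by $L$ into the length-$(m{+}1)$ context by means of the defining axioms of $L$ (those asserting $ft^{n_i}(ctx_{p_i,n_i}(x_i)) = \Gamma$); predicate symbols are handled identically. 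This yields the interpretation $G : \mathbb{T}' \to \mathbb{T}$ with $G(\sigma_m) = \sigma^{(m)}$, the identity on $\mathbb{T}_b$. In the reverse direction, $F : \mathbb{T} \to \mathbb{T}'$ sends a non-$\mathbb{T}_b$ symbol $\sigma$ to $\sigma_0(\emptyCtx, x_1, \ldots, x_k)$ (legitimate because the first argument has the terminal sort $(ctx,0)$) and is the identity on $\mathbb{T}_b$.

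I would take the axioms of $\mathbb{T}'$ to be the axioms of $\mathbb{T}_b$ together with the $F$-translations of the axioms of $\mathbb{T}$; with this choice the verification divides into three parts. First, that $F$ and $G$ send axioms to theorems: for $F$ this is immediate from the definition of the axioms of $\mathbb{T}'$, while for $G$ it reduces, using that $\sigma^{(m)}$ is built from $\alpha$, to the fact that $\alpha$ is a morphism of theories. Second, that $F$ and $G$ are mutually inverse up to the paper's equivalence of interpretations: $G \circ F$ is the identity essentially on the nose since $\sigma^{(0)} = \sigma$, whereas $F \circ G$ requires showing that $F(\sigma^{(m)})$ is equivalent to $\sigma_m(\Gamma, x_1, \ldots, x_k)$, which I would prove by induction on $m$ from clause (4) defining $\alpha_{\mathbb{T}'}$. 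Third, that $F$ and $G$ commute with the structure maps, i.e. $G \circ \alpha_{\mathbb{T}'} = \alpha \circ L(G)$ (and dually for $F$), which is precisely the inductive clause $\sigma^{(m+1)} = (\alpha \circ L)\,\sigma^{(m)}$ matched against clause (4).

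The step I expect to be the main obstacle is the coherent management of the iterated lifts $\sigma^{(m)}$, and the correlated verification that $F \circ G \simeq \mathrm{id}_{\mathbb{T}'}$ respects the $L$-algebra structure. Each application of $L$ introduces a fresh length-one context variable that must be identified with the correct truncation of the accumulated context, and one must check that the chain of restricted-term side conditions generated at each stage is equivalent, in the sense of the paper, to the single condition $ctx^{n+m}(\Delta) = \Gamma$ appearing in clause (4). Making this identification match the prescribed $\alpha_{\mathbb{T}'}$ exactly — rather than merely up to a derivable equivalence — is where the defining axioms of $L$ and the prestability of $\mathbb{T}$ must be used with care.
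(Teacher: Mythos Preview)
Your overall architecture---iterated lifts $\sigma^{(m)}$ playing the role of the paper's $\Gamma \times t$, and inverse interpretations $F(\sigma) = \sigma_0(\emptyCtx,\vec{x})$, $G(\sigma_m) = \sigma^{(m)}$---matches the paper's proof exactly. The difficulty is your choice of axioms for $\mathbb{T}'$.

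You take the axioms of $\mathbb{T}'$ to be only those of $\mathbb{T}_b$ together with the $F$-translations of the axioms of $\mathbb{T}$. But every $F$-translated axiom lives entirely in the $\sigma_0$-fragment of $\mathbb{T}'$: none of the symbols $\sigma_m$ with $m>0$ appears. Hence in your $\mathbb{T}'$ the higher-index symbols are completely unconstrained, and nothing is derivable relating $\sigma_0$ to $\sigma_1$. This breaks your argument in two places. First, $F$ is not an $L$-algebra morphism: the equation $F\circ\alpha = \alpha_{\mathbb{T}'}\circ L(F)$ applied to $L(\sigma)$ asks that $F(\alpha(L(\sigma)))$, a term built from $\sigma_0$'s, be provably equivalent in $\mathbb{T}'$ to $\alpha_{\mathbb{T}'}(L(\sigma_0)(\ldots)) = \sigma_1(\ldots)|_{\psi}$, and no axiom of your $\mathbb{T}'$ can yield such a theorem. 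Second, and for the same reason, $F\circ G \simeq \mathrm{id}_{\mathbb{T}'}$ fails outright: $F(\sigma^{(m)})$ is a $\sigma_0$-term, and $\sigma_m$ is a free symbol, so $F(G(\sigma_m)) \equiv \sigma_m$ is not derivable. Clause (4) is only the meta-level specification of the morphism $\alpha_{\mathbb{T}'}$; it does not produce any theorem inside $\mathbb{T}'$, so it cannot drive the induction you sketch.

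The paper repairs exactly this point by adjoining, in addition to the (translated) axioms of $\mathbb{T}$, the axioms
\[
\sststile{}{\Gamma,\vec{x}}\ \Gamma \times \sigma_0(\emptyCtx,\vec{x}) \;\cong\; \sigma_m(\Gamma,\vec{x})
\quad\text{and}\quad
\Gamma \times R_0(\emptyCtx,\vec{x}) \;\ssststile{}{\Gamma,\vec{x}}\; R_m(\Gamma,\vec{x}),
\]
which internalize the recursion $\sigma_{m+1} = \alpha_{\mathbb{T}'}(L(\sigma_m))$ as theorems of $\mathbb{T}'$; with these in place both $F\circ G \simeq \mathrm{id}$ and the $L$-algebra compatibility of $F$ become immediate. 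A minor secondary point: replacing a function symbol with codomain $(ctx,0)$ ``by the constant $\emptyCtx$'' loses its domain of definition when the symbol is partial; the paper instead replaces such a $\sigma$ by a predicate $R_\sigma$ recording definedness.
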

\begin{proof}
Let $\mathbb{T}$ be a prestable theory together with a map $f : \mathbb{T}_b \to \mathbb{T}$ with $\mathcal{F}_0$ and $\mathcal{P}_0$ as the sets of function and predicate symbols.
First, note that we may assume that for every $\sigma : s_1 \times \ldots \times s_k \to s$ in $\mathcal{F}_0$, $s \neq (ctx,0)$.
Indeed, we can always replace such a function symbol with a predicate symbol $R_\sigma : s_1 \times \ldots \times s_k$.

Second, note that for every term $t \in Term_{\mathcal{F}_0}(V)_{(p,n)}$ and every $m \in \mathbb{N}$, we can construct the following restricted term:
\[ \alpha L(ft^{m-1}(\Gamma), \alpha L(ft^{m-2}(\Gamma), \ldots \alpha L(\Gamma, t))) \]
in $RTerm_{\mathbb{T}}(L^m(V) \amalg \{ \Gamma : (ctx,m) \})_{(p,n+m)}$, which we denote by $\Gamma \times t$.
Analogously, we can define for every formula $\varphi \in Form_{\mathbb{T}}(V)$ and every $m \in \mathbb{N}$,
a formula $\Gamma \times \varphi \in Form_{\mathbb{T}}(L^m(V) \amalg \{ \Gamma : (ctx,m) \})$.

Let $\mathbb{T}'$ be a contextual theory under $\mathbb{T}_b$ defined from the sets $\mathcal{F}_0$ and $\mathcal{P}_0$.
Note that every term (formula, sequent) of $\mathbb{T}$ is naturally a term (formula, sequent) of $\mathbb{T}'$.
Axioms of $\mathbb{T}'$ is the axioms of $\mathbb{T}$ together with the following axioms:
\begin{align*}
& \sststile{}{x_1, \ldots x_k} \tau_0(\emptyCtx, x_1, \ldots x_k) \cong f(\tau(x_1, \ldots x_k)) \\
P_0(\emptyCtx, x_1, \ldots x_k) & \ssststile{}{x_1, \ldots x_k} f(P(x_1, \ldots x_k)) \\
& \sststile{}{\Gamma, x_1, \ldots x_k} \Gamma \times \sigma_0(\emptyCtx, x_1, \ldots x_k) \cong \sigma_m(\Gamma, x_1, \ldots x_k) \\
\Gamma \times R_0(\emptyCtx, x_1, \ldots x_k) & \ssststile{}{\Gamma, x_1, \ldots x_k} R_m(\Gamma, x_1, \ldots x_k)
\end{align*}
for every function symbol $\tau$ and predicate symbol $P$ of $\mathbb{T}_b$ and every $\sigma \in \mathcal{F}_0$ and $R \in \mathcal{P}_0$.

There is an obvious map $\mathbb{T} \to \mathbb{T}'$ and we can define a map $\mathbb{T}' \to \mathbb{T}$
which maps $\sigma_m(\Gamma, x_1, \ldots x_k)$ to $\Gamma \times \sigma_0(\emptyCtx, x_1, \ldots x_k)$,
$R_m(\Gamma, x_1, \ldots x_k)$ to $\Gamma \times R_0(\emptyCtx, x_1, \ldots x_k)$,
$\tau_0(\Gamma, x_1, \ldots x_k)$ to $f(\tau(x_1, \ldots x_k))|_{\Gamma\downarrow}$, and $P_0(\Gamma, x_1, \ldots x_k)$ to $f(P(x_1, \ldots x_k))|_{\Gamma\downarrow}$.
Axioms guarantee that these maps are inverses of each other.
\end{proof}

Contextual theories constructed in the previous lemma are not convenient in practice, but usually theories are defined in a contextual form.
It is easy to define such theory: we just need to specify sets $\mathcal{F}_0$ and $\mathcal{P}_0$ and the set of axioms.
It is also easy to define a morphism of contextual theories since we only need to define it on symbols from $\mathcal{F}_0$ and $\mathcal{P}_0$.
Then it uniquely extends to a morphism of prestable theories.

\subsection{Algebraic dependent type theories}

Algebraic dependent type theories are prestable theories under $\substTh$ in which substitution commutes with all function symbols.
To define such theories, we need to define weakening first.
For every $p \in \{ty,tm\}$, the operations of weakening $wk^{m,l}_{p,n} : (ctx,n+m) \times (p,n+l) \to (p,n+m+l)$ are defined as follows:
\begin{align*}
wk^{m,0}_{p,n}(\Gamma,a) & = subst_{p,n+m,n}(\Gamma, a, v_{n+m-1}, \ldots v_m) \\
wk^{m,l+1}_{p,n}(\Gamma,a) & = subst_{p,n+m+l+1,n+l+1}(\Gamma', a, v_{n+m+l}, \ldots v_{m+l+1}, v_l, \ldots v_0),
\end{align*}
where $\Gamma' = wk^{m,l}_{ty,n}(\Gamma,ctx(a))$.
We also define $wk^{m,l}_{ctx,n} : (ctx,n+m) \times (ctx,n+l) \to (ctx,n+m+l)$ as follows:
\begin{align*}
wk^{m,0}_{ctx,n}(\Gamma,a) & = \Gamma \\
wk^{m,l+1}_{ctx,n}(\Gamma,a) & = wk^{m,l}_{ty,n}(\Gamma,a).
\end{align*}

Now, we need to introduce a new derived operation.
For every $m,n,k \in \mathbb{N}$ and $p \in \{ ctx, ty, tm \}$, we define the following function:
\[ subst^m_{p,n,k} : (ctx,n) \times (p,k+m) \times (tm,n)^k \to (p,n+m). \]
First, let $subst^0_{ctx,n,k}(B, A, a_1, \ldots a_k) = B$ and $subst^{m+1}_{ctx,n,k} = subst^m_{ty,n,k}$.
If $p \in \{ ty, tm \}$, then let $subst^m_{p,n,k}(B, a, a_1, \ldots a_k)$ be equal to
\[ subst_{p,n+m,k+m}(B', a, wk^{m,0}_{tm,n}(a_1), \ldots wk^{m,0}_{tm,n}(a_k), v_{m-1}, \ldots v_0), \]
where $B' = subst^m_{ctx,n,k}(B, ctx_{k+m}(a), a_1, \ldots a_k)$.

\begin{defn}
A prestable theory under $\substTh$ is an \emph{algebraic dependent type theory} if,
for every $\sigma \in \mathcal{F}$, $\sigma : (p_1,n_1) \times \ldots \times (p_k,n_k) \to (p,n)$
and every $R \in \mathcal{P}$, $R : (p_1,n_1) \times \ldots \times (p_k,n_k)$, the following sequents are derivable in it:
\medskip
\begin{center}
\AxiomC{$\Delta \times \sigma(b_1, \ldots b_k) \downarrow$}
\AxiomC{$\bigwedge_{1 \leq i \leq m} ty(a_i) = subst_{ty,l,i-1}(\Gamma, ft^{m-i}(\Delta), a_1, \ldots a_{i-1})$}
\BinaryInfC{$subst^n_{p,l,m}(\Gamma, \Delta \times \sigma(b_1, \ldots b_k), a_1, \ldots a_m) = \Gamma \times \sigma(b_1', \ldots b_k')$}
\DisplayProof
\end{center}
\medskip

\begin{center}
\AxiomC{$\Delta \times R(b_1, \ldots b_k)$}
\AxiomC{$\bigwedge_{1 \leq i \leq m} ty(a_i) = subst_{ty,l,i-1}(\Gamma, ft^{m-i}(\Delta), a_1, \ldots a_{i-1})$}
\BinaryInfC{$\Gamma \times R(b_1', \ldots b_k')$}
\DisplayProof
\end{center}
\medskip
where $b_i' = subst^{n_i}_{p_i,l,m}(\Gamma, b_i, a_1, \ldots a_m)$.

The category of algebraic dependent type theories will be denoted by $\algtt$.
\end{defn}

The construction of colimits in \rprop{th-cocomplete} implies that $\algtt$ is closed under colimits in $\substTh/\PSt_\mathcal{C}$.
The inclusion functor $\algtt \to \substTh/\PSt_\mathcal{C}$ has a left adjoint $\substTh/\PSt_\mathcal{C} \to \algtt$, which simply adds the required axioms.

We can prove a stronger version of \rlem{stable-con} for algebraic dependent type theories:
\begin{lem}[adtt-con]
Every algebraic dependent type theory is isomorphic to a contextual theory in which every function symbol in $\mathcal{F}_0$ has a signature of the form
\[ \sigma : s_1 \times \ldots \times s_k \to (p,0), \]
where $p \in \{ ty,tm \}$.
\end{lem}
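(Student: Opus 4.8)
The plan is to reduce to the contextual setting via \rlem{stable-con}, and then to lower the internal depth of the output sorts one level at a time by splitting each operation into its \emph{underlying context} and the single type or term sitting on top of it. This is nothing but the decomposition, available in any contextual category, of an object into its father and the type it carries; the role of the $\algtt$-axioms (substitution commutes with every operation) is to let these split-off pieces be transported to arbitrary contexts, so that they again generate honest contextual operations.

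By \rlem{stable-con} I may assume $\mathbb{T}$ is a contextual theory under $\substTh$, with basic function symbols $\mathcal{F}_0$ and basic predicate symbols $\mathcal{P}_0$. Predicate symbols carry no output sort and are left untouched, so it suffices to remove, from each $\sigma \in \mathcal{F}_0$ with codomain $(p,n)$ and $n>0$, the excess depth. Fix such a $\sigma : (p_1,n_1)\times\ldots\times(p_k,n_k) \to (p,n)$. For arguments $x_1,\ldots,x_k$ the term $\sigma(x_1,\ldots,x_k)$ has underlying context $F := ctx_{p,n}(\sigma(x_1,\ldots,x_k))$ of sort $(ctx,n) = (ty,n-1)$, and over $F$ it is a single type or term of internal depth $0$. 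I therefore introduce two new basic symbols: $\widehat{\sigma}$ of codomain $(ty,n-1)$, intended to compute $F$, and $\overline{\sigma}$ of codomain $(p,0)$, intended to compute that top type or term. Concretely $\overline{\sigma}$ is pinned down over the ambient context $F$ by the reconstruction identity
\[ \sigma(x_1,\ldots,x_k) \;=\; \bigl(\widehat{\sigma}(x_1,\ldots,x_k)\bigr) \vdash \overline{\sigma}(x_1',\ldots,x_k'), \]
where each $x_i'$ is the weakening of $x_i$ along $\widehat{\sigma}(x_1,\ldots,x_k)$, built from the operations $wk^{m,l}_{p,n}$. Since $\widehat{\sigma}$ has depth $n-1$ and $\overline{\sigma}$ has depth $0$, iterating this step (or performing it simultaneously on the whole father-tower of every symbol, to accommodate infinitely many symbols) terminates with all codomains of the form $(p,0)$ with $p \in \{ty,tm\}$.

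Let $\mathbb{T}'$ be the contextual theory with $\mathcal{F}_0' = (\mathcal{F}_0 \setminus \{\sigma\}) \cup \{\widehat{\sigma},\overline{\sigma}\}$, the same predicate symbols, and as axioms the axioms of $\mathbb{T}$ with every occurrence of $\sigma$ rewritten through the identity above. I define $\phi : \mathbb{T} \to \mathbb{T}'$ to send $\sigma$ to that right-hand side and to fix every other symbol, and $\psi : \mathbb{T}' \to \mathbb{T}$ to send $\widehat{\sigma}(x_1,\ldots,x_k)$ to $ctx_{p,n}(\sigma(x_1,\ldots,x_k))$ and $\overline{\sigma}$ to $\sigma$ reread over its own underlying context, again fixing the rest. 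The one genuinely new point is that $\overline{\sigma}$, being a basic symbol of a contextual theory, generates a whole family over every ambient context, whereas the identity above only fixes it over $F$; here the $\algtt$-hypothesis enters decisively. Because substitution commutes with every operation, each application of $\overline{\sigma}$ on its domain is a substitution instance of the generic one, so its value is determined by transporting the value over $F$ along the substitution, using axioms \eqref{ax:subst-subst} and \eqref{ax:type-subst-tm}; this both defines $\psi(\overline{\sigma})$ consistently over all ambients and shows that $\overline{\sigma}$ (and $\widehat{\sigma}$, since $ctx_{p,n}$ commutes with substitution) again commutes with substitution, so $\mathbb{T}'$ lies in $\algtt$. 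That $\psi\phi$ and $\phi\psi$ are the identity up to equivalence of interpretations is then the recovery identity, i.e.\ the generic instance of the reconstruction, which unwinds to axioms \eqref{ax:type-var} and \eqref{ax:subst-var} (and the section/term correspondence $\varphi$ of \rprop{T1-CCat} when $p=tm$).

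The conceptual content is light --- it is only the fact that an object of a contextual category is its father extended by a type --- so the main obstacle is the bookkeeping that makes this precise: aligning the internal-depth indices, the ambient-context lengths, and the weakenings $x_i'$; checking that, after rewriting through the reconstruction, the axioms of $\mathbb{T}$ become exactly derivable consequences of the $\algtt$-axioms for $\widehat{\sigma}$ and $\overline{\sigma}$; and, above all, verifying that defining $\overline{\sigma}$ over an arbitrary ambient context by substitution-transport is well defined and substitution-stable. It is precisely at this last point that the full strength of an algebraic dependent type theory, rather than mere prestability, is used: without substitution commuting with the operations there would be no way to reconstitute $\overline{\sigma}$ outside the single context $F$.
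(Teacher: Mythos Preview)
Your decomposition has a genuine gap at the definition of $\psi$. A morphism of prestable theories out of a contextual theory is determined by its value on each basic symbol at ambient level $0$, since the values at higher levels are then forced by compatibility with $\alpha$. You must therefore assign to $\overline{\sigma}_0$ a restricted term of sort $(p,0)$ in the variables $y_i : (p_i,n_i)$, and there is none: the only datum involving $\sigma$ is $\sigma_0(\emptyCtx, y_1,\ldots,y_k)$, which has sort $(p,n)$, and bringing it down to depth $0$ requires $n$ terms of sort $(tm,0)$ to substitute --- data that are simply not present among the $y_i$. Your appeal to the $\algtt$ axioms does not rescue this: those axioms let you transport the generic instance $\overline{\sigma}_n(F,x_1',\ldots,x_k')$ along a substitution \emph{once you supply terms $a_1,\ldots,a_n$ over the target context}, but at level $0$ with bare inputs $y_i$ you have no such terms, so the claim that ``each application of $\overline{\sigma}$ on its domain is a substitution instance of the generic one'' already fails for $\overline{\sigma}_0$. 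Model-theoretically, there is no natural partial function $M_{(p_1,n_1)}\times\cdots\times M_{(p_k,n_k)}\to M_{(p,0)}$ manufactured from $\sigma$; the phrase ``$\sigma$ reread over its own underlying context'' does not name a term of sort $(p,0)$.

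The paper's proof repairs exactly this by making the missing substitution data part of the signature: it replaces $\sigma$ in one step by a single symbol
\[ \sigma' : (p_1,n_1)\times\cdots\times(p_k,n_k)\times(tm,0)^n \to (p,0). \]
The backward map sends $\sigma'_0(\emptyCtx,x_1,\ldots,x_k,a_1,\ldots,a_n)$ to the substitution of $a_1,\ldots,a_n$ into $\sigma_0(\emptyCtx,x_1,\ldots,x_k)$, which is a well-typed term of sort $(p,0)$; the forward map recovers $\sigma_0$ as $\sigma'_n$ applied to weakenings of the $x_i$ with the de Bruijn variables $v_{n-1},\ldots,v_0$ in the extra slots. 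No iteration and no auxiliary context symbol $\widehat{\sigma}$ are needed.
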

\begin{proof}
Let $\mathbb{T}$ be an algebraic dependent type theory.
By \rlem{stable-con}, we may assume that $\mathbb{T}$ is contextual.
Then we define theory $\mathbb{T}'$ which has the same predicate symbols as $\mathbb{T}$.
For every $\sigma : (p_1,n_1) \times \ldots \times (p_k,n_k) \to (p,n)$ in $\mathcal{F}_0$, we add the following function symbol to $\mathbb{T}'$:
\[ \sigma' : (p_1,n_1) \times \ldots \times (p_k,n_k) \times (tm,0)^n \to (p,0). \]
Then we define $f(\sigma_0(\Gamma, x_1, \ldots x_k))$ as
\[ \sigma'_n(\Gamma, wk^{n,n_1}_{p_1,0}(\Gamma, x_1), \ldots wk^{n,n_k}_{p_k,0}(\Gamma, x_k), v_{n-1}, \ldots v_0). \]
For every predicate symbol $R$, we define $f(R(x_1, \ldots x_k))$ as $R(x_1, \ldots x_k)$.
For every axiom $\varphi \sststile{}{V} \psi$ of $\mathbb{T}$, we add axiom $f(\varphi) \sststile{}{V} f(\psi)$ to $\mathbb{T}'$.
Then $f$ is a morphism of theories $f : \mathbb{T} \to \mathbb{T}'$.

Moreover, there is a morphism $g : \mathbb{T}' \to \mathbb{T}$, which is defined as follows:
\begin{align*}
g(\sigma'_0(\Gamma, x_1, \ldots x_k, y_1, \ldots y_n)) & = subst^n_{p,0,0}(\Gamma, \sigma_0(\Gamma, x_1, \ldots x_k), y_1, \ldots y_n) \\
g(R(x_1, \ldots x_k)) & = R(x_1, \ldots x_k)
\end{align*}
The axioms of algebraic dependent type theories imply that $f$ and $g$ are inverses of each other.
\end{proof}

When we say that an algebraic dependent type theory is contextual (or presented in a contextual form), then we assume that it has a form as described in the previous lemma.

If an algebraic dependent type theory is presented in a contextual form, then every term is equivalent to a term in which substitution operations are applied only to variables.
We can as usual omit the first argument to $subst_{p,n,k}$.
Also, if $X : (p,n+k)$ and $a_1, \ldots a_k : (tm,n)$, then we write $X[a_1, \ldots a_k]$ for
\[ subst_{p,n,k}(X, v_{n-1}, \ldots v_0, a_1, \ldots a_k). \]

One last problem is that we often need to apply weakening operations to variables.
It is not convenient to do this explicitly, so we introduce named variables in our terms.
Let $Var$ be some fixed countable set of variables.
To distinguish these variable from the ones that we used before we will call the latter \emph{metavariables}.
First, we assume that every metavariable $X$ of sort $(p,n)$ is equipped with a sequence of variables of length $n$, which we call the context of this metavariable.
Usually, we do not specify the context of a metavariable explicitly since it can be inferred from formulas and terms in which this metavariable appears.

Second, every binding should be annotated with a variable.
In particular, instead of $A_1, \ldots A_n \vdash b$ we should write $x_1 : A_1, \ldots x_n : A_n \vdash b$
and instead of $\sigma(A^1_1, \ldots A^1_{n_1}.\ b_1, \ldots A^k, \ldots A^k_{n_k}.\ b_k)$ we should write
\[ \sigma((x_1 : A^1_1), \ldots (x_{n_1} : A^1_{n_1}).\ b_1, \ldots ((x_1 : A^k_1), \ldots (x_{n_k} : A^k_{n_k}).\ b_k) \]

Now, we may use variables instead of de Bruijn indices.
If a variable $x_i$ appears in a context $x_1, \ldots x_n$, then it is decoded into expression $v_{n-i}$.
Every metavariable should appear in a context where all variables from its context are available.
Then a metavariable $X$ with context $x_1, \ldots x_n$ should be replaced with expression $subst(X, x_1, \ldots x_n)$.
We may also write $X[x_{i_1} \mapsto a_{i_1}, \ldots x_{i_k} \mapsto a_{i_k}]$,
which is replaced with expression $subst(X, a_1, \ldots a_n)$, where $a_j = x_j$ if $j \notin \{ i_1, \ldots i_k \}$.
Finally, we may write $ft^i(X)$, which works like a metavariable with context $x_1, \ldots x_{n-i}$.

\section{Examples}

Now, let us describe a few examples of algebraic dependent type theories with substitution.
If we take their stabilization, then we get theories corresponding to usual constructions of the type theory.
Every theory is presented in the contextual form.
Also, to simplify the notation, we use the following agreement.
For every sequent of the form $\varphi \sststile{}{} \Gamma \vdash A\ type$, there is also sequent $\Gamma \vdash A\ type \sststile{}{} \varphi$
and, for every sequent of the form $\varphi \sststile{}{} \Gamma \vdash a : A$, there is also sequent $\Gamma \vdash a\!\downarrow\ \sststile{}{} \varphi$.

\begin{example}
The theory of unit types with eta rules has function symbols $\top : (ty,0)$ and $unit : (tm,0)$ and the following axioms:
\medskip
\begin{center}
\AxiomC{}
\UnaryInfC{$\vdash \top\ type$}
\DisplayProof
\quad
\AxiomC{}
\UnaryInfC{$\vdash unit : \top$}
\DisplayProof
\quad
\AxiomC{$\vdash t : \top$}
\UnaryInfC{$\vdash t \deq unit$}
\DisplayProof
\end{center}
\end{example}

\begin{example}
The theory of unit types without eta rules has function symbols $\top : (ty,0)$, $unit : (tm,0)$ and $\top\text{-}elim : (ty,1) \times (tm,0) \times (tm,0) \to (tm,0)$.
The axioms for $\top$ and $unit$ are the same, and the axioms for $\top\text{-}elim$ are
\medskip
\begin{center}
\AxiomC{$x : \top \vdash D\ type$}
\AxiomC{$\vdash d : D[x \mapsto unit]$}
\AxiomC{$\vdash t : \top$}
\TrinaryInfC{$\vdash \top\text{-}elim(x.\,D, d, t) : D[x \mapsto t]$}
\DisplayProof
\end{center}

\medskip
\begin{center}
\AxiomC{$x : \top \vdash D\ type$}
\AxiomC{$\vdash d : D[x \mapsto unit]$}
\BinaryInfC{$\vdash \top\text{-}elim(x.\,D, d, unit) \deq d$}
\DisplayProof
\end{center}
\end{example}

\begin{example}[sigma-eta]
The theory of $\Sigma$ types with eta rules has function symbols
\begin{align*}
\Sigma & : (ty,1) \to (ty,0) \\
pair & : (ty,1) \times (tm,0) \times (tm,0) \to (tm,0) \\
proj_1 & : (ty,1) \times (tm,0) \to (tm,0) \\
proj_2 & : (ty,1) \times (tm,0) \to (tm,0)
\end{align*}
and the following axioms:
\medskip
\begin{center}
\AxiomC{}
\UnaryInfC{$\vdash \Sigma(x.\,B)\ type$}
\DisplayProof
\quad
\AxiomC{$\vdash b : B[x \mapsto a]$}
\UnaryInfC{$\vdash pair(x.\,B, a, b) : \Sigma(x.\,B)$}
\DisplayProof
\end{center}

\medskip
\begin{center}
\AxiomC{$\vdash p : \Sigma(x.\,B)$}
\UnaryInfC{$\vdash proj_1(x.\,B, p) : ft(B)$}
\DisplayProof
\quad
\AxiomC{$\vdash p : \Sigma(x.\,B)$}
\UnaryInfC{$\vdash proj_2(x.\,B, p) : B[x \mapsto proj_1(x.\,B, p)]$}
\DisplayProof
\end{center}

\medskip
\begin{center}
\AxiomC{$\vdash b : B[x \mapsto a]$}
\UnaryInfC{$\vdash proj_1(x.\,B, pair(x.\,B, a, b)) \deq a$}
\DisplayProof
\qquad
\AxiomC{$\vdash b : B[x \mapsto a]$}
\UnaryInfC{$\vdash proj_2(x.\,B, pair(x.\,B, a, b)) \deq b$}
\DisplayProof
\end{center}

\medskip
\begin{center}
\AxiomC{$\vdash p : \Sigma(x.\,B)$}
\UnaryInfC{$\vdash pair(x.\,B, proj_1(x.\,B, p), proj_2(x.\,B, p)) \deq p$}
\DisplayProof
\end{center}
\end{example}

\begin{example}[sigma-no-eta]
The theory of $\Sigma$ types without eta rules has the following function symbols:
\begin{align*}
\Sigma & : (ty,1) \to (ty,0) \\
pair & : (ty,1) \times (tm,0) \times (tm,0) \to (tm,0) \\
\Sigma\text{-}elim & : (ty,1) \times (tm,2) \times (tm,0) \to (tm,0)
\end{align*}
The axioms for $\Sigma$ and $pair$ are the same, and the axioms for $\Sigma\text{-}elim$ are
\medskip
\begin{center}
\AxiomC{$z : \Sigma(x.\,B) \vdash D\ type$}
\AxiomC{$x : ft(B), y : B \vdash d : D'$}
\AxiomC{$\vdash p : \Sigma(x.\,B)$}
\TrinaryInfC{$\vdash \Sigma\text{-}elim(z.\,D, x y.\,d, p) : D[z \mapsto p]$}
\DisplayProof
\end{center}

\medskip
\begin{center}
\def\extraVskip{1pt}
\AxiomC{$z : \Sigma(x.\,B) \vdash D\ type$}
\AxiomC{$x : ft(B), y : B \vdash d : D'$}
\AxiomC{$\vdash b : B[x \mapsto a]$}
\TrinaryInfC{$\vdash \Sigma\text{-}elim(z.\,D, x y.\,d, pair(x.\,B, a, b)) \deq d[x \mapsto a, y \mapsto b]$}
\DisplayProof
\end{center}
\end{example}

where $D' = D[z \mapsto pair(x.\,B, x, y)]$.

\begin{example}[pi-eta]
The theory of $\Pi$ types with eta rules has function symbols
\begin{align*}
\Pi & : (ty,1) \to (ty,0) \\
\lambda & : (tm,1) \to (tm,0) \\
app & : (ty,1) \times (tm,0) \times (tm,0) \to (tm,0)
\end{align*}
and the following axioms:
\medskip
\begin{center}
\AxiomC{}
\UnaryInfC{$\vdash \Pi(x.\,B)\ type$}
\DisplayProof
\quad
\AxiomC{}
\UnaryInfC{$\vdash \lambda(x.\,b) : \Pi(x.\,ty(b))$}
\DisplayProof
\end{center}

\medskip
\begin{center}
\AxiomC{$\vdash f : \Pi(x.\,B)$}
\AxiomC{$\vdash a : ft(B)$}
\BinaryInfC{$\vdash app(x.\,B, f, a) : B[x \mapsto a]$}
\DisplayProof
\end{center}

\medskip
\begin{center}
\AxiomC{$\vdash a : ft(B)$}
\UnaryInfC{$\vdash app(x.\,B, \lambda(x.\,b), a) \deq b[x \mapsto a]$}
\DisplayProof
\quad
\AxiomC{$\vdash f : \Pi(x.\,B)$}
\UnaryInfC{$\vdash \lambda(y.\,app(x.\,B, f, y)) \deq b$}
\DisplayProof
\end{center}
\end{example}

\begin{example}[Id]
The theory of identity types has function symbols
\begin{align*}
Id & : (tm,0) \times (tm,0) \to (ty,0) \\
refl & : (tm,0) \to (tm,0) \\
J & : (ty,3) \times (tm,1) \times (tm,0) \times (tm,0) \times (tm,0) \to (tm,0)
\end{align*}
and the following inference rules:
\medskip
\begin{center}
\AxiomC{$\vdash ty(a) \deq ty(a')$}
\UnaryInfC{$\vdash Id(a, a')\ type$}
\DisplayProof
\quad
\AxiomC{}
\UnaryInfC{$\vdash refl(a) : Id(a, a)$}
\DisplayProof
\end{center}

\medskip
\begin{center}
\AxiomC{$x : A, y : A, z : Id(x, y) \vdash D\ type$}
\AxiomC{$x : A \vdash d : D'$}
\AxiomC{$\vdash p : Id(a, a')$}
\TrinaryInfC{$\vdash J(x y z.\,D, x.\,d, a, a', p) : D[x \mapsto a, y \mapsto a', z \mapsto p]$}
\DisplayProof
\end{center}

\medskip
\begin{center}
\AxiomC{$x : A, y : A, z : Id(x, y) \vdash D\ type$}
\AxiomC{$x : A \vdash d : D'$}
\BinaryInfC{$\vdash J(x y z.\,D, x.\,d, a, a, refl(a)) \deq d[x \mapsto a]$}
\DisplayProof
\end{center}
\medskip

where $A = ty(a)$ and $D' = D[y \mapsto x, z \mapsto refl(x)]$.
\end{example}

\begin{example}
We define an endofunctor $U$ on the category of algebraic dependent type theories.
For every such theory $\mathbb{T}$, theory $U(\mathbb{T})$ has the same symbols as $\mathbb{T}$,
but it also has a universe which is closed under all function symbols of $\mathbb{T}$.

Let $\mathbb{T}$ be an algebraic dependent type theory in a contextual form.
Then $U(\mathbb{T})$ has the same predicate symbols as $\mathbb{T}$ and the following function symbols:
\begin{align*}
U & : (ty,0) \\
El & : (tm,0) \to (ty,0) \\
\sigma & : s_1 \times \ldots \times s_k \to (p,0) \\
\sigma^U & : U(s_1) \times s_1 \times \ldots \times U(s_k) \times s_k \to (tm,0)
\end{align*}
for every function symbol $\sigma : s_1 \times \ldots \times s_k \to (p,0)$ of $\mathbb{T}$, where $U(p,n_i) = (tm,0) \times \ldots \times (tm,n_i)$.

Theory $U(\mathbb{T})$ has the following axioms:
\medskip
\begin{center}
\AxiomC{}
\UnaryInfC{$\vdash U\ type$}
\DisplayProof
\qquad
\AxiomC{$\vdash a : U$}
\doubleLine
\UnaryInfC{$\vdash El(a)\ type$}
\DisplayProof
\end{center}
\medskip

For every function symbol $\sigma : (p_1,n_1) \times \ldots \times (p_k,n_k) \to p$ of $\mathbb{T}$ and every $1 \leq i \leq k$, we add the following axioms to $U(\mathbb{T})$:
\[ \vdash \sigma^U(t_1, \ldots, t_m)\!\downarrow\ \sststile{}{t_1, \ldots t_m}\ \vdash \sigma^U(t_1, \ldots t_m) : U \]
\[ \sststile{}{V}\ \vdash El(\sigma^U(\ldots, a_1, \ldots a_{n_i+1}, b, \ldots)) \cong e_p(\sigma(\ldots, b|_{\varphi_i}, \ldots)), \]
where $a_1, \ldots a_{n_i+1}, b$ are variables that correspond to $i$-th variable in $\sigma$, $e_{ty}(x) = x$, and $e_{tm}(x) = ty(x)$, and $\varphi_i$ equals to
\[ \bigwedge_{1 \leq j \leq n_i+1} El(a_j) = ft^{n_i+1-j}(e_{p_i}(b)). \]

To define the rest of the axioms of $U(\mathbb{T})$, we need to introduce a few auxiliary functions.
For every set of variables $V$, we define a set $U(V)$ as follows:
\[ V \amalg \{ x^j : (tm,n-j)\ |\ x : (p,n) \in V, 0 \leq j \leq n \}. \]
Now, we define a function $U : Term_\mathbb{T}(V)_{(ty,n)} \to Term_{U(\mathbb{T})}(U(V))_{(tm,n)}$ as follows:
\begin{align*}
U(ft^j(e_p(x))) & = x^j \\
U(ft^{j+1}(e_p(\sigma_n(\Gamma, t_1, \ldots t_k)))) & = U(ft^j(\Gamma)) \\
U(e_p(\sigma_n(\Gamma, t_1, \ldots t_k))) & = \sigma^U_n(\Gamma, t_1', \ldots t_k'),
\end{align*}
where $t_i' = U(ft^{n_i}(e_{p_i}(t_i))), \ldots U(e_{p_i}(t_i)), t_i$.

We add all axioms of $\mathbb{T}$ to $U(\mathbb{T})$ and, for every axiom $\varphi \sststile{}{V} \psi$ of $\mathbb{T}$, we add the following axiom:
\[ U(\varphi) \land \bigwedge_{x \in V} \xi_x \sststile{}{U(V)} U(\psi), \]
where $U(R(t_1, \ldots t_k))$ equals to $R(t_1, \ldots t_k)$,
$U(t_1 = t_2)$ equals to $U(e_p(t_1)) = U(e_p(t_2)) \land t_1 = t_2$,
and $\xi_x$ equals to $(e_p(x) = El(x^0)) \land \bigwedge_{1 \leq j \leq n} ft(El(x^{j-1})) = El(x^j)$.

Finally, let us show that $U$ is a functor.
Let $f : \mathbb{T} \to \mathbb{T}'$ be a morphism of algebraic dependent type theories.
Then $U(f)$ is defined in the obvious way on $U$, $El$ and symbols from $\mathbb{T}$.
Define $U(f)(\sigma^U(\ldots, x^{n_i}_i, \ldots x^0_i, x_i \ldots))$ as
\[ U(e_p(f(\sigma(x_1, \ldots x_k))))|_{\bigwedge_{1 \leq i \leq k} \xi_{x_k}}. \]
It is easy to see that $U(f)$ is a morphism of contextual theories and that $U$ preserves identity morphisms and compositions.
Thus, $U$ is a functor.
\end{example}

\begin{example}
There is a natural map $\mathbb{T} \to U(\mathbb{T})$.
We define $U^\omega(\mathbb{T})$ as the colimit of the following sequence:
\[ \mathbb{T} \to U(\mathbb{T}) \to U^2(\mathbb{T}) \to \ldots \]
Then $U^\omega(\mathbb{T})$ is the theory with a hierarchy of universes closed under constructions of $\mathbb{T}$.
\end{example}

\bibliographystyle{amsplain}
\bibliography{ref}

\end{document}